\newtheorem{theorem}{Theorem}[section]
\newtheorem{lemma}[theorem]{Lemma}
\newtheorem{proposition}[theorem]{Proposition}
\newtheorem{corollary}[theorem]{Corollary}
\theoremstyle{remark}
\newtheorem{remark}[theorem]{Remark}
\newcommand{\R}{\mathbb{R}}
\newcommand{\RN}{\mathbb{R}^N}
\newcommand{\locLip}{W^{1,\infty}_{\textit{loc}}}
\title[]{Large-time behavior of unbounded solutions of the viscous Hamilton-Jacobi equation: quadratic and  subquadratic cases}
\author{Alexander Quaas and Andrei Rodr\'{\i}guez-Paredes}
\begin{document}
	\maketitle

\begin{abstract}
We determine the large-time behavior of unbounded solutions for the so-called \textit{viscous Hamilton Jacobi equation}, $u_t - \Delta u + |Du|^m = f(x)$, in the \textit{quadratic} and \textit{subquadratic} cases (i.e., for $1<m\leq 2$), with a particular focus on allowing arbitrary growth at infinity for $f$ and the prescribed initial data. The lack of a comparison principle for the associated \textit{ergodic problem} is overcome by proving that a \textit{generalized simplicity} holds for sub- and supersolutions of the ergodic problem. Moreover, as the uniqueness of solutions of the parabolic problem remains open in the current setting, our result on large-time holds for \textit{any} solution, even if multiple solutions exist.
\end{abstract}

\vspace*{1em}
\noindent\textbf{Keywords:} Hamilton-Jacobi equations, viscous Hamilton-Jacobi equation, unbounded solutions, large-time behavior, ergodic behavior, viscosity solutions.\\

\noindent\textbf{MSC (2010):} 35B40, 35B51, 35D40, 35K15, 35K55.
	
\section{Introduction}
    The following work deals with the parabolic \textit{viscous Hamilton-Jacobi Equation},
        \begin{align}
            u_t - \Delta u + |Du|^m = f(x) &{}\quad\textrm{in } \RN\times (0,+\infty), \label{vhj_whole}\\
            u(x, 0) = u_0(x) &{}\quad\textrm{in } \RN,\label{initialData_whole}
        \end{align}
    in the the \textit{subquadratic} and \textit{quadratic} cases, i.e.~when $1<m\leq 2$. The functions $f, u_0$ are continuous and assumed bounded from below, but not from above. Thus, in this setting, solutions to \eqref{vhj_whole}-\eqref{initialData_whole} are in general unbounded. Further assumptions on $f, u_0$ are stated as needed.
        
    Our main interest is determining the large-time behavior of solutions to \eqref{vhj_whole}-\eqref{initialData_whole} for data---$u_0$ and $f$---as general as possible, particularly with the most general growth conditions at infinity.
    
    We recall that the problem of large-time behavior for \eqref{vhj_whole} set on a \textit{bounded} domain (along with the necessary boundary data) is addressed in Tabet \cite{tabet2010large} for the superquadratic case, then by this same author in collaboration with Barles and Porretta in \cite{barles2010large} for the subquadratic case. While some key ideas from these works remain useful in the present setting, there is considerable difficulty in obtaining similar results for unbounded solutions over an unbounded domain. 
    
    There are several works that address the large-time behavior of solutions of \eqref{vhj_whole} set in the whole space in the context of weak solutions, but by necessity these work within the class of bounded solutions; we refer the reader to \cite{barles2001space, benachour2004asymptotic, biler2004asymptotic, gallay2007asymptotic, iagar2010asymptotic, laurenccot2009non} and the works cited therein, though this list is by no means exhaustive.\\
    
    For unbounded solutions over the whole space, Ichihara \cite{ichihara2012large} obtains results on large-time behavior for a class of equations which includes the model \eqref{vhj_whole}, for all $m>1$, by combining probabilistic and PDE techniques. Additionally, these results make use of precise growth assumptions for both $f$ and $u_0$: roughly, it is assumed in \cite{ichihara2012large} that $f(x)\approx |x|^{m^*}$, where $m^*=\frac{m}{m-1}$ (the conjugate exponent of $m$) and that $u_0$ has at most polynomial growth.
    
    The results of \cite{ichihara2012large} show that the large-time behavior of solutions of \eqref{vhj_whole} is determined by the associated \textit{ergodic problem},
   		\begin{equation}\label{erg_prob}\tag{\textit{EP}}
   			\lambda -\Delta \phi + |D\phi|^m = f(x)\quad \textrm{in }\RN,
   		\end{equation}      
    also studied therein, where both $\lambda$ and $\phi$ are unknown. More precisely, it is shown in \cite{ichihara2012large} that the solution of \eqref{vhj_whole} behaves like $\phi(x) + \lambda t$ as $t\to+\infty$, where $(\lambda, \phi)$ is the unique solution pair of \eqref{erg_prob} within the class of bounded-from-below functions with at most polynomial growth (here, $\phi$ is unique up to an additive constant). This result if commonly referred to as \textit{ergodic large-time behavior}, and is in accordance with the bounded domain case.
    
    Given the importance of the ergodic problem, we comment with some detail on some of the previous results in the unbounded setting. Also, we will sometimes write ``\eqref{erg_prob}$_\lambda$'' to stress the dependence of the problem on the constant $\lambda$; in particular, when a given value for $\lambda$ is considered.
    
    The results of \cite{ichihara2012large} were extended by Barles and Meireles \cite{barles2016unbounded} in terms of the generality allowed for the data, though only the model equation \eqref{vhj_whole} is addressed. In particular, in the subquadratic case, it is shown that uniqueness of solutions for \eqref{erg_prob} holds if $f(x)\approx |x|^\alpha$---where $\alpha>0$ is now arbitrary---and $Df$ behaves accordingly (see \cite{barles2016unbounded}, Theorem 3.10 and Proposition 3.7).
    
    In contrast, in a recent work by Arapostathis, Biswas and Caffarelli \cite{arapostathis2019uniqueness}, uniqueness of solutions for \eqref{erg_prob} is obtained assuming only a technical hypothesis which relates $f$ and $Df$, but still allows $f$ to have arbitrary growth---see \ref{assmp:growth_Df} below. This is accomplished by means of the uniqueness of a measure related to the underlying dynamics of the problem. However, no comparison principle is proved for \eqref{erg_prob}, and in fact no information is provided on the behavior of \textit{sub-} and \textit{supersolutions}. These are crucial elements towards obtaining large-time behavior for \eqref{vhj_whole} in our approach and in many of the cited works.
    
    On this matter, we have found that a \textit{generalized simplicity} holds for \textit{sub-} and \textit{supersolutions} of \eqref{erg_prob}. We use ``simplicity'' here in the spirit of similar results for e.g.~the principal eigenfunction of second-order elliptic operators (see e.g.~\cite{berestycki1994principal}); and ``generalized'' since the property holds for also for sub- and supersolutions, and not merely for solutions. These results are proved in Propositions \ref{prop_simplicity} and \ref{prop_simplicity_below} and, as expected, are an essential part of the proof of our main result, Theorem \ref{thm_main}.\\
     
    Returning to the question of large-time behavior, the authors studied this problem in the superquadratic case ($m>2$) in \cite{barles2020large}, in collaboration with G.~Barles. In \cite{barles2020large} it is shown that ergodic large-time behavior occurs assuming minimal hypotheses on $f$ and $u_0$, although in this case we are able to prove a very general comparison principle for the parabolic problem, and a similar result was already available for the ergodic problem. Regarding the data, for $f$ it is assumed that there exists a  nondecreasing function $\varphi: [0, +\infty) \to [0, +\infty)$ and constants $\alpha, c>0$ such that for all $r\geq 0$, $c^{-1} r^\alpha \leq \varphi(r)$, and for all $x\in \RN$ and $r=|x|$, $c^{-1} \varphi(r) -c \leq f(x) \leq c (\varphi(r) + 1)$. In other words, $f$ is controlled by some radial function that has \textit{at least} some polynomial growth, but is not bounded in any way by above. 
    
    In the present setting, we are able to obtain the result on large-time behavior without the assumption of a ``minimum growth rate'' for $f$. We discuss this further after providing the precise statements of our results.
   
    \subsection{Assumptions and main results}
    
    Our main result establishes, for $1<m\leq 2$, that the large-time behavior of solutions of \eqref{vhj_whole}-\eqref{initialData_whole} is determined by the solutions of \eqref{erg_prob}. To this end, we require the following assumptions on $f$.
	 
	 \begin{enumerate}[label=(H\arabic*)]        
        \item\label{assmp:growth_f} $f\in \locLip(\RN)$ is coercive; i.e., $f(x)\to +\infty$ as $|x|\to+\infty$.
	 	\item\label{assmp:growth_Df} $\limsup\limits_{x\to +\infty} \frac{|Df(x)|^{\frac{1}{2m-1}}}{|f(x)|^{\frac{1}{m}}} < +\infty.$ 
	 \end{enumerate}
     
     \begin{remark}
        Assumption \ref{assmp:growth_Df} may be interpreted as preventing $f$ from having large oscillation at infinity. The fact that $\frac{1}{2m-1}<\frac{1}{m}$ for all $m>1$ allows $f$ to have arbitrary growth. 
     \end{remark}
     
     We also introduce the \textit{generalized ergodic constant},
        \begin{equation}\label{def_lambda^*}
            \lambda^* = \sup \{\lambda\in \R \ | \ \text{\eqref{erg_prob}$_\lambda$ has a }C^2\textrm{-subsolution}\}.
        \end{equation}
    The relevant properties of $\lambda^*$ are provided in Section \ref{sec_prelim}.
    
    The following is a version of the simplicity results for \eqref{erg_prob}; for more results of this type, see Section \ref{sec_prelim} below.
            
        \begin{proposition}\label{prop_simplicity}
            If  $\chi$ and $v\in USC(\RN)$ are respectively a solution and a subsolution of \eqref{erg_prob}$_{\lambda^*}$, both bounded from below, then there exists $c\in \R$ such that $v(x) = \chi(x) + c$ for all $x\in \RN$.
        \end{proposition}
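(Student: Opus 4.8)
The plan is to prove that $w:=v-\chi$ is constant by first locating an interior maximum of $w$ and then propagating the equality by a strong maximum principle. A preliminary observation: since $\chi$ is a viscosity solution of the uniformly elliptic equation \eqref{erg_prob}$_{\lambda^*}$ with $f\in\locLip(\RN)$, standard interior regularity (first $C^{1,\alpha}_{loc}$, then Schauder applied to $-\Delta\chi=f-|D\chi|^{m}-\lambda^*$) gives $\chi\in C^{2}(\RN)$; in particular $\chi$ is a classical solution and $w\in USC(\RN)$, so that $\{w\ge t\}$ is closed for every $t$.

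The first step --- and the one I expect to be the main obstacle --- is to show that $M:=\sup_{\RN}w$ is finite and is attained at some $x_{0}\in\RN$. This is precisely where \ref{assmp:growth_f}--\ref{assmp:growth_Df} enter. Using the coercivity of $f$ and the a priori gradient and growth estimates for sub- and supersolutions of \eqref{erg_prob}$_{\lambda^*}$ developed in Section \ref{sec_prelim} --- in which \ref{assmp:growth_Df} is used to bound the oscillation of $f$ at infinity, and the lower bounds on $v$ and $\chi$ are used to pin down their growth --- one gets that $v$ and $\chi$ have matching leading-order growth, so that $w$ is bounded from above; thus $M<+\infty$. For the attainment I would run a doubling-of-variables argument on $v(x)-\chi(y)-\tfrac{1}{2\varepsilon}|x-y|^{2}$, with a suitable penalization guaranteeing a maximizer $(x_{\varepsilon},y_{\varepsilon})$, and use the coercivity of $f$ together with the local equicontinuity of $v$ and $\chi$ (standard viscosity regularity) to keep $(x_{\varepsilon},y_{\varepsilon})$ in a fixed compact set; letting the parameters go to zero then produces $x_{0}$ with $w(x_{0})=M$. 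Carrying out this localization in the unbounded setting, with $f$ of arbitrary growth, is the delicate point; if the supremum is not a priori attained one instead rescales near a maximizing sequence and passes to a limit equation.

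Once $x_{0}$ is in hand the rest is comparatively standard. At $x_{0}$ the function $v$ is a subsolution and $\chi+M$ a classical supersolution of \eqref{erg_prob}$_{\lambda^*}$, with $v\le\chi+M$ on $\RN$ and equality at $x_{0}$; set $w_{0}:=\chi+M-v\ge 0$, so that $w_{0}(x_{0})=0=\min w_{0}$. The key point is that, near $x_{0}$, the nonnegative function $w_{0}$ is a viscosity supersolution of a linear uniformly elliptic operator: if $\psi\in C^{2}$ touches $w_{0}$ from below at $x$, then $\chi+M-\psi$ touches $v$ from above at $x$, so the subsolution inequality for $v$, minus the classical identity $\lambda^*-\Delta\chi+|D\chi|^{m}=f$ at $x$ (legitimate since $\chi\in C^{2}$), combined with the convexity bound $|a|^{m}-|a-q|^{m}\le m|a|^{m-2}a\cdot q$ (valid since $m>1$), yields $\Delta\psi(x)-b(x)\cdot D\psi(x)\le 0$ with $b(x):=m|D\chi(x)|^{m-2}D\chi(x)$, a continuous vector field because $m>1$ forces $|D\chi|^{m-2}D\chi\to 0$ as $D\chi\to 0$. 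The strong maximum principle for viscosity supersolutions of such operators then forces $w_{0}\equiv 0$ on a ball around $x_{0}$.

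Hence $\{w=M\}$ is open; it is also closed, being $\{w\ge M\}$ with $w\in USC(\RN)$, and nonempty. Since $\RN$ is connected, $\{w=M\}=\RN$, i.e.\ $v=\chi+M$ everywhere, and the proposition follows with $c=M$.
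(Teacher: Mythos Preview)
Your second step---the strong maximum principle argument once a global maximum of $v-\chi$ is in hand---is essentially the same as the paper's conclusion (which cites a strong comparison result rather than writing out the linearization, but the content is identical). The gap is entirely in your first step.

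You assert that ``the a priori gradient and growth estimates for sub- and supersolutions of \eqref{erg_prob}$_{\lambda^*}$ developed in Section~\ref{sec_prelim}'' give matching leading-order growth for $v$ and $\chi$, hence $\sup(v-\chi)<+\infty$. No such estimates exist in Section~\ref{sec_prelim}: Lemma~\ref{lemma:supersol_growth} gives only a \emph{lower} growth bound for supersolutions, and there is no upper growth bound for subsolutions that would control $v$ from above in terms of $\chi$. Your doubling-of-variables sketch does not explain what confines $(x_\varepsilon,y_\varepsilon)$ to a compact set, and the fallback ``rescale near a maximizing sequence and pass to a limit equation'' is not a proof. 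You also invoke \ref{assmp:growth_Df}, but in fact the proposition requires only the coercivity of $f$; this is a sign that the mechanism you have in mind is not the right one.

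The paper's device, which you are missing, is an elementary multiplicative perturbation exploiting the $m$-homogeneity of the gradient term. One does not compare $v$ with $\chi$ directly but with $\mu\phi_R$ for $\mu>1$, where $(\lambda_R,\phi_R)$ solves the state-constraints ergodic problem on $B_R$; since $\mu^{1-m}\le 1$, the function $\mu\phi_R$ is a supersolution with right-hand side $\mu(f-\lambda_R)$, and at any interior maximum $x_R$ of $v-\mu\phi_R$ (which exists because $\phi_R\to+\infty$ on $\partial B_R$) one gets $(\mu-1)f(x_R)\le\mu\lambda_R-\bar\lambda$. Choosing $\mu=\mu_R:=1+\lambda_R-\bar\lambda>1$ and using the coercivity of $f$ traps $x_R$ in a fixed ball independent of $R$; letting $R\to\infty$ gives $v\le\bar\phi+c$. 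A second pass with $\mu\bar\phi$ (now using Lemma~\ref{lemma:supersol_growth} to see $v-\mu\bar\phi\to-\infty$) produces the actual global maximizer, after which your strong-maximum-principle step finishes the argument. The point is that the $\mu$-perturbation manufactures the strict gap $(\mu-1)(f-\lambda_R)$, which is coercive, and this---not any a priori matching of growth rates---is what localizes the maximum.
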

     
    The proof of Proposition \ref{prop_simplicity} follows from combining an approximation of \eqref{erg_prob} over bounded domains with an appropriately chosen perturbation of solutions; this perturbation relies on the homogeneity of the gradient term. In the end, these computations allow us to exploit the coercivity of $f$ and ``order solutions at infinity''. It is remarkable that this result can be obtained from such elementary techniques.
    
    \medskip

      \begin{theorem}\label{thm_main}
         Assume $u_0\in C(\RN)$ is bounded from below and assumptions \ref{assmp:growth_f} and \ref{assmp:growth_Df} hold. Then, there exists a constant $\hat{c}\in \R$ such that 
             \begin{equation*}
                 u(\cdot,t) - \lambda^*t \to \phi \text{ locally uniformly over } \RN,
             \end{equation*}
         where $u=u(x,t)$ is any solution of \eqref{vhj_whole}-\eqref{initialData_whole}, $\lambda^*$ is defined by \eqref{def_lambda^*} and $\phi$ is the unique solution of \eqref{erg_prob}$_{\lambda^*}$ given by Proposition  \ref{prop_simplicity}.
      \end{theorem}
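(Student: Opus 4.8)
The plan is to follow the classical "half-relaxed limits + ergodic approximation" strategy, adapted to the unbounded setting, with Proposition \ref{prop_simplicity} (and its companion Proposition \ref{prop_simplicity_below}) replacing the comparison principle for \eqref{erg_prob}.

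First I would set $w(x,t) = u(x,t) - \lambda^* t$ and note that $w$ solves $w_t - \Delta w + |Dw|^m = f(x) - \lambda^*$. The first task is to obtain uniform-in-time local bounds on $w$: a lower bound is inherited from the lower bounds on $u_0$ and $f$ together with $\lambda^*$ being the ergodic constant (comparison with a suitable stationary sub-/super-solution of \eqref{erg_prob}$_{\lambda^*}$ and with constants/$\lambda^* t$ type barriers), and the upper bound should come from a combination of the gradient-term coercivity (the $|Dw|^m$ term produces decay that, against coercive $f$, confines $w$ on compact sets) and a barrier built from a supersolution of the ergodic problem. Here is where assumptions \ref{assmp:growth_f} and \ref{assmp:growth_Df} enter, exactly as in the construction of solutions to \eqref{erg_prob} and as in \cite{barles2020large,barles2016unbounded}: they guarantee that \eqref{erg_prob}$_{\lambda^*}$ actually has a solution $\phi$ bounded from below, which serves as the reference barrier. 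Once $w$ is locally bounded, interior parabolic gradient estimates (Bernstein-type, using $1<m\le 2$) give local equicontinuity of $\{w(\cdot + \cdot\,, t+\cdot)\}_{t}$, so the half-relaxed limits

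\begin{equation*}
\overline{w}(x) = \limsup_{t\to+\infty,\ y\to x} w(y,t), \qquad \underline{w}(x) = \liminf_{t\to+\infty,\ y\to x} w(y,t)
\end{equation*}

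are well-defined, finite, and respectively USC and LSC, with $\underline{w}\le \overline{w}$.

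Next I would show that $\overline{w}$ is a subsolution and $\underline{w}$ a supersolution of the stationary ergodic equation \eqref{erg_prob}$_{\lambda^*}$. This is the standard "large-time stability of viscosity solutions" argument: if $\overline{w} - \psi$ has a strict local max at $x_0$ for a smooth test function $\psi$, one localizes in space and uses a sequence of times $t_n\to\infty$ to produce points $(x_n,t_n)$ where $w(x,t) - \psi(x)$ has a local max in $x$ and a vanishing time-derivative in an appropriate relaxed sense (exploiting that $w$ is bounded so the time-increments $w(x,t_n+1)-w(x,t_n)$ cannot all be bounded away from $0$); plugging into the equation for $w$ and passing to the limit kills the $w_t$ term and yields $\lambda^* - \Delta\psi(x_0) + |D\psi(x_0)|^m \le f(x_0)$. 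The supersolution statement for $\underline{w}$ is symmetric. Both $\overline{w}$ and $\underline{w}$ are bounded from below by the local bounds above.

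Now Proposition \ref{prop_simplicity} applies to the pair $(\phi, \overline{w})$: since $\phi$ solves and $\overline{w}$ subsolves \eqref{erg_prob}$_{\lambda^*}$ and both are bounded from below, $\overline{w} = \phi + c_1$ for some constant $c_1$. Dually, using the subsolution $\phi$ (or invoking Proposition \ref{prop_simplicity_below} for the supersolution $\underline{w}$ against the solution $\phi$), one gets $\underline{w} = \phi + c_2$. The constants satisfy $c_2 \le c_1$, and the remaining point is to prove $c_1 = c_2 =: \hat c$, which forces $w(\cdot,t)\to \phi + \hat c$ locally uniformly (local uniform convergence is automatic once $\overline w = \underline w$, by the half-relaxed limit squeeze and the equicontinuity). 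To close the gap I would run a final comparison-type argument in time: by the parabolic comparison for \eqref{vhj_whole} against the stationary solution $\phi + \lambda^* t + C$, or more robustly by the argument of \cite{barles2010large,tabet2010large} — showing the map $t\mapsto \sup_{x}(w(x,t) - \phi(x))$ (suitably localized / made finite using the lower bounds and barriers) is nonincreasing and the analogous inf is nondecreasing, so they share a common limit.

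The main obstacle I expect is precisely this last step — identifying $c_1 = c_2$ — because uniqueness of the parabolic solution is not available, so one cannot simply invoke a parabolic comparison principle for $u$ itself. The workaround is to compare any solution $u$ only with the \emph{stationary} super- and sub-solutions $\phi + \lambda^* t + \text{const}$ of the parabolic equation (for which comparison from above against a genuine supersolution, resp.\ from below against a subsolution, \emph{is} available in this class, since $\phi$ is bounded below and the data are bounded below), extract monotone-in-time quantities, and then use the generalized simplicity to pin down the limit. A secondary technical point is making the half-relaxed limits rigorous in the unbounded domain: one must verify the local bounds on $w$ are genuinely uniform in $t$, which again rests on \ref{assmp:growth_f}–\ref{assmp:growth_Df} furnishing the ergodic barrier $\phi$.
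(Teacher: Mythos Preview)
Your overall architecture (set $w=u-\lambda^*t$, obtain local-in-space, uniform-in-time bounds, take half-relaxed limits, apply the simplicity results) matches the paper's Steps~1--2 in spirit. The genuine gap is in your final step, identifying $c_1=c_2$. Both of your proposed mechanisms fail in this setting. First, the quantity $\sup_x\big(w(x,t)-\phi(x)\big)$ is in general \emph{not finite}: $u_0$ is only assumed bounded from below with no growth restriction, so $u_0-\phi$ (hence $w(\cdot,0)-\phi$) need not be bounded above; and even for $t>0$ one only knows that $w(\cdot,t)$ and $\phi$ are each coercive (Lemma~\ref{lemma:supersol_growth}), which says nothing about their difference. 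The symmetric $\inf$ has the same problem. So the monotone-envelope argument of \cite{barles2010large,tabet2010large} does not get off the ground. Second, comparing $u$ directly with $\phi+\lambda^*t+C$ over $\RN$ is exactly the parabolic comparison that is unavailable here (and which the paper stresses is open); the special comparisons the paper \emph{does} use always involve either a bounded domain with a barrier that blows up at the boundary, or a \emph{bounded} barrier against a coercive solution.

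The paper closes the gap by a different mechanism that you are missing. After identifying $\bar v=\phi+\hat c$ via Proposition~\ref{prop_simplicity}, it does \emph{not} look at $\underline v$; instead it uses compactness plus the \emph{strong maximum principle} to upgrade pointwise convergence along a sequence to locally uniform convergence $v(\cdot,t_k)\to\phi+\hat c$ along a full subsequence (Step~2). Then (Steps~3--4) it builds \emph{parabolic} barriers not from $\phi$ but from the approximating pairs $(\lambda_R,\phi_R)$ of \eqref{ergodic_SC_BR} and $(\nu_R,\psi_R)$ of Proposition~\ref{prop_low_conv_R}, scaled by $\mu_R>1$ and $\gamma_R<1$ as in Lemma~\ref{lemma_simplicity}: $\phi_R^{\mu_R}$ blows up on $\partial B_R$, so comparison on the bounded domain $B_R$ is legitimate; $\psi_R^{\gamma_R}$ is periodic, hence bounded, so comparison against the coercive $v(\cdot,t_0)$ over $\RN$ is legitimate. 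These barriers trap $v(\cdot,t+t_n)$ within $\epsilon$ of $\phi+\hat c$ for all $t>0$ after sending $R\to\infty$, which is what yields convergence along \emph{all} $t\to\infty$. If you want to salvage your symmetric approach, you would need to replace the global $\sup/\inf$ of $w-\phi$ by these approximating barriers; but then you are essentially reproducing the paper's Steps~3--4.
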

     
         \begin{remark} Recall that in the case $1<m\leq 2$ there is no comparison principle available for neither the ergodic nor the parabolic problems. In particular, the uniqueness of solutions for the parabolic problem---in the generality of the current setting---remains an open problem. Nevertheless, our result on large time behavior holds for any solution of \eqref{vhj_whole}.
          \end{remark}

    The proof of our main result, Theorem \ref{thm_main}, begins by showing that a half-relaxed limit for $u-\lambda^* t$ converges to the solution $\phi$ of \eqref{erg_prob} over a subsequence $t_k\to +\infty$; it is here that Proposition \ref{prop_simplicity} is essential. From here we construct parabolic barriers for some new initial condition related to the previous subsequence, again using some of the ideas behind the proof of the simplicity results.\\
    
    \noindent\textbf{Notation and organization of the article.} The notation used in the article is entirely standard. Section \ref{sec_prelim} contains preliminary results concerning \eqref{vhj_whole} and \eqref{erg_prob}, including the proof of Proposition \ref{prop_simplicity} as well as others which are collected from previous works for convenience. Our main result is proved in Section \ref{sec_large}.

   \section{Preliminaries}\label{sec_prelim}
   
   We start by establishing some existence results for the parabolic problem. 
   
    \begin{theorem}[Existence of solutions]\label{thm_exist_parab}
        Assume $f, u_0\in \locLip(\RN)$ are bounded from below. Then, there exists a continuous, bounded from below solution of \eqref{vhj_whole}-\eqref{initialData_whole}.
    \end{theorem}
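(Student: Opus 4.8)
The plan is to construct a solution of the Cauchy problem \eqref{vhj_whole}-\eqref{initialData_whole} by a standard truncation-and-exhaustion argument, approximating the whole space $\RN$ by an increasing sequence of balls $B_R$ and the data $f, u_0$ by bounded data, then passing to the limit using stability of viscosity solutions together with suitable barriers. First I would fix a large constant $K$ and, for each $R>0$, consider the equation $u_t - \Delta u + |Du|^m = f_R(x)$ in $B_R \times (0,\infty)$ with initial data $u_0$ on $B_R$ and, say, a linearly growing lateral boundary condition (or simply large-constant boundary data), where $f_R = \min(f, R)$ or a similar bounded truncation. On the bounded cylinder with bounded, Lipschitz data, existence of a (bounded) continuous viscosity solution $u_R$ follows from Perron's method combined with comparison, which is available in the bounded-domain setting for $1<m\le 2$; this is precisely the kind of result imported from \cite{tabet2010large,barles2010large}.

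The second step is to obtain local uniform bounds on the family $\{u_R\}$ independent of $R$, so that the half-relaxed limits
\begin{equation*}
\underline{u}(x,t) = \liminf_{R\to\infty,\, (y,s)\to(x,t)} u_R(y,s), \qquad \overline{u}(x,t) = \limsup_{R\to\infty,\, (y,s)\to(x,t)} u_R(y,s)
\end{equation*}
are finite. The lower bound is the easy side: since $f$ and $u_0$ are bounded from below, a constant (or an affine function of $t$) is a subsolution, and comparison on $B_R$ gives $u_R \ge -C$ uniformly. For the upper bound one uses a supersolution barrier: because $1<m\le 2$ and the gradient term has the right sign, functions of the form $\psi(x) + Ct$ with $\psi$ growing polynomially (matched to the growth of $u_0$ and $f$ on compact sets) serve as supersolutions to the truncated problem on $B_R$ for $C$ large, and comparison pins $u_R$ from above locally uniformly in $R$. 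One must be slightly careful that the lateral boundary data chosen for the $B_R$ problems does not spoil these comparisons; choosing the lateral data to lie below the supersolution barrier and above the subsolution handles this.

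The third step is to invoke the stability of viscosity solutions under half-relaxed limits: $\overline{u}$ is a subsolution and $\underline{u}$ a supersolution of \eqref{vhj_whole} in $\RN\times(0,\infty)$, with $\overline{u}(\cdot,0)\le u_0 \le \underline{u}(\cdot,0)$ (the initial condition being recovered using the same barriers near $t=0$). Here a genuine comparison principle on $\RN$ is not available, so instead of concluding $\overline u \le \underline u$ directly, I would argue that $u := \underline{u} = \overline{u}$ by a more hands-on route, or simply take $u := \overline{u}$ (respectively $\underline u$), which is already a sub- (resp.\ super-) solution, and then use a local argument — interior parabolic regularity for the truncated problems plus Arzel\`a--Ascoli on compact subsets — to extract a locally uniformly convergent subsequence $u_{R_k} \to u$, so that $u$ is a bona fide continuous viscosity solution attaining $u_0$. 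Finiteness from below of $u$ is inherited from the uniform lower bound, giving the claimed bounded-from-below continuous solution.

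The main obstacle I anticipate is the absence of a comparison principle on the unbounded domain, which forces the bounds and the recovery of the initial data to be done entirely through explicitly constructed barriers on the bounded cylinders $B_R\times(0,T)$, uniformly in $R$; getting a clean local-in-space, uniform-in-$R$ upper bound that is compatible with arbitrary (merely bounded-below, locally Lipschitz) $u_0$ and coercive $f$ is the delicate point, and is exactly where the structure $1<m\le 2$ and the favorable sign of $|Du|^m$ must be exploited. A secondary technical point is ensuring the half-relaxed limits coincide with an honest continuous solution rather than merely a sub/supersolution pair; this is handled by the interior regularity estimates for the approximating problems, which are themselves standard for this class of quasilinear parabolic equations.
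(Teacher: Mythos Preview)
Your overall architecture --- solve on balls $B_R$, obtain uniform local bounds, extract a locally uniform limit via interior regularity and Arzel\`a--Ascoli, conclude by stability --- is the same as the paper's. The lower bound and the compactness step (gradient/H\"older estimates, then Arzel\`a--Ascoli) are fine and match what the paper does.

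The gap is in the upper bound. You propose a supersolution of the form $\psi(x)+Ct$ with $\psi$ growing polynomially, ``matched to the growth of $u_0$ and $f$ on compact sets''. But the hypotheses only say $f,u_0\in W^{1,\infty}_{\mathrm{loc}}(\RN)$ are bounded from below; both may grow faster than any polynomial. For comparison on $B_R\times(0,T)$ you need your barrier to dominate the lateral data on $\partial B_R$ and to satisfy $C-\Delta\psi+|D\psi|^m\ge f$ on $B_R$; with arbitrary growth of $u_0$ and $f$ there is no fixed polynomial $\psi$ and fixed $C$ that do this uniformly in $R$. Tying the lateral data to the barrier, as you suggest, does not help: either the barrier depends on $R$ (and the bound is not uniform), or the lateral data is forced to be bounded and you lose control of what the limit equation is. Truncating $f$ to $f_R=\min(f,R)$ with $R\to\infty$ has the same defect: the constant $C$ in your barrier must grow with $R$.

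The paper sidesteps this entirely by choosing the \emph{state-constraint} boundary condition on $\partial B_R$ (i.e., the solution is required to be a supersolution up to the boundary). This makes $u^R$ the \emph{maximal} subsolution on $B_R$, so $u^{R'}\le u^R$ on $B_R\times[0,T_R]$ whenever $R'\ge R$; the local upper bound on any $B_{\bar R}$ is then simply the fixed function $u^{2\bar R+1}$, with no external barrier needed. This monotonicity-from-maximality is the missing idea in your proposal, and it is exactly what makes the argument go through for data of arbitrary growth.
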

    
    \begin{proof}
        The proof is essentially the same as that of Proposition 3 in \cite{barles2020large} for the case $m>2$. We repeat the main points of the argument for convenience. Also, we note that the difference in the generality allowed for the initial $u_0$ in the is due to the lack of a comparison result for \eqref{vhj_whole} in the present case, $m\leq 2$. See Remark \ref{locLip_data} below.
        
        Let $\{T_R\}$ be a sequence such that where $T_R>0$ for all $R>0$ and $T_R\nearrow +\infty$ as $R\to +\infty$, and consider the \textit{parabolic state-constraints problem} on $B_R \times (0,T_R)$, 
    		\begin{align}
    			u_t - \Delta u + |Du|^m = f(x) &{} \quad\textrm{ in } B_R \times (0,T_R), \label{eqonball}\\
    			u_t - \Delta u + |Du|^m \geq f(x) &{} \quad\textrm{ in } \partial B_R \times (0,T_R), \label{sconball}\\
    			u(\cdot,0) = u_0 &{} \quad\textrm{ in } \overline{B}_R. \label{initialonball}
    		\end{align}
        
        By the results of \cite{barles2004generalized}, this problem has a unique continuous viscosity solution, which we denote by $u^R$.
        
        It can be shown that if $R'\geq R$, then $u^{R'}(x, t) \leq u^R(x,t)$ for all $(x,t)\in B_R\times [0,T_R]$. This follows from the fact that solutions of \eqref{eqonball} satisfying \eqref{sconball} are maximal. Additionally, a constructive proof of this is provided in Lemma 2 in \cite{barles2020large}, and it is equally valid for $1<m\leq 2$. The solution of \eqref{vhj_whole}-\eqref{initialData_whole} can be then obtained as a locally uniform limit of the solutions $u^R$ as $R\to +\infty$ as follows.
        
        Consider a fixed $\bar R$. For $R>2\bar R +1$, we have, assuming $u_0,f\geq 0$
            \begin{equation*}
                0 \leq u^R \leq u^{2\bar R +1} \quad \hbox{in  }\overline{B_{2\bar R}} \times [0,T_{2\bar R}]\;
            \end{equation*}
        hence $\{u^R\}_{R> 2\bar R +1}$ is uniformly bounded over $\overline{B_{2\bar R}} \times [0,T_{2\bar R}]$.
         
        Furthermore, using Theorem~\ref{gradBound} and Corollary \ref{gradBoundCor} in the Appendix, the $C^{0,1/2}$-norm
        of $u^R$ on $\overline{B_{\bar R}} \times [0,T_{2\bar R}]$ remains also uniformly bounded for $R>2\bar R +1$.
        
        Thus, by using the Ascoli-Arzela Theorem, we have the uniform convergence of $u^R$ 
        on $\overline{B_{\bar R}} \times [0,T_{2\bar R}]$. Then, by a diagonal argument, we may extract a subsequence of $\{u^R\}_{R>0}$ that converges locally uniformly to some $u\in C(\RN\times [0,+\infty)$. By stability, it follows that $u$ is a viscosity solution of \eqref{vhj_whole}-\eqref{initialData_whole}.
    \end{proof}
    
    \begin{remark}\label{locLip_data}
       \textit{(i)}   In \cite{barles2020large}, existence of solutions for all bounded-from-below $u_0\in C(\RN)$ was obtained by approximating $u_0$ with functions in $\locLip(\RN)$ and employing a result analogous to Theorem \ref{thm_exist_parab}. A comparison result that is not available is a crucial tool in this procedure, since it is used to control the ``error'' between the corresponding solutions in terms of the error in the approximation of the initial data.
       \textit{(ii)} Theorem \ref{thm_main} holds for any solution of the parabolic problem that may also exist for $u_0\in C(\RN)$. 
    \end{remark}

    \begin{theorem}\label{thm_lambda^*}
        Assume that $f\in \locLip(\RN)$ is bounded from below. Then $\lambda^*$ as defined in \eqref{def_lambda^*} is finite and \eqref{erg_prob}$_{\lambda^*}$ has a $C^2$-solution.
    \end{theorem}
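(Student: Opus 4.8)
The plan is to establish finiteness of $\lambda^*$ in two steps---first showing that the defining set is nonempty (so $\lambda^* > -\infty$), then that it is bounded above (so $\lambda^* < +\infty$)---and finally to produce a $C^2$-solution of \eqref{erg_prob}$_{\lambda^*}$ by a vanishing-discount / bounded-domain approximation combined with the a priori gradient bounds already available in the Appendix.

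For nonemptiness: since $f$ is bounded from below, say $f \geq -C_0$, the constant function $\phi \equiv 0$ is a $C^2$-subsolution of \eqref{erg_prob}$_\lambda$ whenever $\lambda \leq -C_0$, because then $\lambda - \Delta 0 + |D0|^m = \lambda \leq -C_0 \leq f(x)$. Hence $\{\lambda \mid \text{\eqref{erg_prob}}_\lambda \text{ has a } C^2\text{-subsolution}\}$ contains $(-\infty, -C_0]$ and $\lambda^* \geq -C_0$. For the upper bound, I would argue by contradiction: suppose $\lambda^* = +\infty$, so that for every $\lambda$ there is a $C^2$-subsolution $\phi_\lambda$ satisfying $-\Delta \phi_\lambda + |D\phi_\lambda|^m \leq f(x) - \lambda$ in $\RN$. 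Testing this against a suitable domain or using the maximum-principle structure: on a large ball $B_R$, at an interior minimum point $x_0$ of $\phi_\lambda$ one has $-\Delta \phi_\lambda(x_0) \leq 0$ and $D\phi_\lambda(x_0)=0$, giving $0 \leq f(x_0) - \lambda$; but this only controls $\lambda$ by $\sup_{B_R} f$, which is not a contradiction since we have not fixed the domain. The cleaner route is to recall that $f$ is bounded from below but \emph{not} assumed coercive here (coercivity is \ref{assmp:growth_f}, used later), so one should instead invoke that a subsolution on all of $\RN$ forces $\lambda$ to be controlled by a quantity built from $\inf f$; concretely, integrating $-\Delta \phi_\lambda + |D\phi_\lambda|^m \leq f - \lambda$ against a cutoff $\psi_R^2$ and using $\int \psi_R^2 |D\phi_\lambda|^m \geq 0$ together with $\int \psi_R^2 (-\Delta \phi_\lambda) = 2\int \psi_R (D\psi_R \cdot D\phi_\lambda)$ and Young's inequality yields $\lambda \int \psi_R^2 \leq \int \psi_R^2 f + C\int |D\psi_R|^{m^*}$, whence $\lambda \leq \|f\|_{L^\infty(B_{2R})} + C R^{N - m^*}$; choosing $R$ small (or optimizing) bounds $\lambda$ independently of the subsolution. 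Thus $\lambda^* < +\infty$.

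To construct the solution at $\lambda = \lambda^*$, I would solve, for each $R>0$, the stationary state-constraint (or Dirichlet) problem
\begin{equation*}
\lambda^* - \Delta \phi_R + |D\phi_R|^m = f(x) \quad \text{in } B_R,
\end{equation*}
with state constraints on $\partial B_R$, using the generalized-solution theory of \cite{barles2004generalized}; existence of a continuous solution $\phi_R$ follows once one has the relevant sub/supersolution pair, and a $C^2$-subsolution of \eqref{erg_prob}$_{\lambda^*}$ (obtained as a limit of $C^2$-subsolutions for $\lambda \nearrow \lambda^*$, or directly from the definition of $\lambda^*$ via a closedness argument) serves as a global subsolution on every $B_R$. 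Normalizing $\phi_R(0)=0$ and applying the interior gradient estimates of Theorem~\ref{gradBound} and Corollary~\ref{gradBoundCor}, the family $\{\phi_R\}$ is locally equi-Lipschitz; by Ascoli--Arzel\`a and a diagonal extraction, $\phi_R \to \phi$ locally uniformly along a subsequence, and stability of viscosity solutions gives that $\phi$ solves \eqref{erg_prob}$_{\lambda^*}$ in $\RN$. Finally, since $|D\phi| \in L^\infty_{loc}$ and $f \in \locLip$, elliptic regularity bootstrapping ($\phi \in W^{2,p}_{loc}$ for all $p$, hence $C^{1,\alpha}_{loc}$, hence $\Delta \phi \in C^{0,\alpha}_{loc}$, hence $\phi \in C^{2,\alpha}_{loc}$) upgrades $\phi$ to a classical $C^2$-solution.

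The main obstacle is the solvability of the bounded-domain problem at the critical value $\lambda = \lambda^*$: for $\lambda$ slightly below $\lambda^*$ one expects a genuine subsolution with some slack, but \emph{at} $\lambda^*$ the subsolution may be only barely admissible, and one must ensure the state-constraint problem still has a (bounded-from-below) solution rather than the ergodic constant jumping. I would handle this by first solving at $\lambda^* - \varepsilon$, obtaining $\phi_{R,\varepsilon}$, then passing $\varepsilon \to 0$ using the same equi-Lipschitz estimates (which are uniform in $\varepsilon$ because the gradient bounds depend only on $\sup_{B_{2R}} f$ and on $\lambda$ ranging in a bounded set), and only afterward letting $R \to \infty$; the order of limits is what makes the argument go through. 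A secondary point to check is that the limit $\phi$ is bounded from below, which follows from comparing $\phi_R$ below with a fixed subsolution plus a constant, using the normalization at the origin.
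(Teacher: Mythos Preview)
The paper does not give a proof here; it simply cites Theorem~2.4 of \cite{barles2016unbounded}. Your sketch is therefore not directly comparable, but let me comment on it.

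Your lower bound $\lambda^*\ge\inf f$ is correct. For the upper bound, the cutoff argument with $\psi_R^2$ does not close when $m<2$: the Young-inequality split needed to absorb the cross term into $\int\psi_R^2|D\phi_\lambda|^m$ would require a factor $\psi_R^{1-2/m}$ with negative exponent. One should test against $\psi_R^{m^*}$ instead (with $m^*=\tfrac{m}{m-1}$); then the computation gives $\lambda\le\sup_{B_{2R}}f+C(m,N)R^{-m^*}$, and taking any \emph{fixed} $R$ (say $R=1$) yields the bound. Your suggestion to ``choose $R$ small'' goes the wrong way, since $R^{-m^*}\to+\infty$.

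For the construction, your approach fixes $\lambda=\lambda^*$ from the start and solves a state-constraint problem on $B_R$ at that value; you correctly flag this as the delicate point, since a subsolution at the critical $\lambda^*$ is not a priori available. The route actually taken in \cite{barles2016unbounded} (and mirrored in the proof of Theorem~\ref{thm_bar_lambda} in this paper) avoids the issue altogether: one solves the ergodic state-constraint problem \eqref{ergodic_SC_BR} on $B_R$ with $\lambda_R$ \emph{unknown}, uses the extremal characterization \eqref{char_lambda_R} to see that $(\lambda_R)_R$ is nonincreasing and bounded below by $\inf f$, hence convergent to some $\bar\lambda$; the normalized $\phi_R$ converge locally by the gradient bounds, and the limit pair solves \eqref{erg_prob}. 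One then identifies $\bar\lambda=\lambda^*$ a posteriori: any global subsolution restricts to $B_R$, so $\lambda^*\le\lambda_R$ for all $R$ and hence $\lambda^*\le\bar\lambda$, while the limit $\phi$ is itself a subsolution at level $\bar\lambda$, giving $\bar\lambda\le\lambda^*$. This sidesteps both your $\varepsilon$-detour and the need for a subsolution at $\lambda^*$. Finally, note that boundedness-from-below of $\phi$ is \emph{not} asserted in this theorem (that is Theorem~\ref{thm_bar_lambda}, which additionally requires $f$ coercive), so your last paragraph is aiming at more than the statement claims.
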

    
    \begin{proof}
        See Theorem 2.4 in \cite{barles2016unbounded}.
    \end{proof}        
    
    \begin{theorem}\label{thm_bar_lambda}
        Assume that $f\in \locLip(\RN)$ is coercive, i.e.,
            \begin{equation*}
                f(x)\to +\infty \quad\text{as}\quad|x|\to+\infty.
            \end{equation*}
        Then \eqref{erg_prob} has a $C^2$-solution which is bounded from below.
    \end{theorem}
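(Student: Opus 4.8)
The goal is to produce a $C^2$-solution $\phi$ of \eqref{erg_prob}$_\lambda$ for some $\lambda\in\mathbb{R}$ (necessarily $\lambda=\lambda^*$, by the characterization of $\lambda^*$) that is bounded from below, under the \emph{coercivity} assumption on $f$. The natural approach is to recover $\phi$ as a limit of solutions of ergodic problems on balls, exactly as in the proof of Theorem \ref{thm_exist_parab}, and then to use the coercivity of $f$ to get a \emph{uniform lower bound} on these approximate solutions, which survives the limit.

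First I would fix $R>0$ and consider the state-constraints ergodic problem on $B_R$: find $(\lambda_R,\phi_R)$ with $\lambda_R-\Delta\phi_R+|D\phi_R|^m=f$ in $B_R$, the reversed inequality (supersolution property) holding up to $\partial B_R$, normalized by $\phi_R(0)=0$. By the theory in \cite{barles2004generalized} (and as used for $\lambda^*$ in \cite{barles2016unbounded}, cf.\ Theorem \ref{thm_lambda^*}) such a pair exists, $\phi_R$ is $C^2$ in the interior with locally uniform gradient bounds coming from the same Bernstein-type estimates invoked in the Appendix, and $\lambda_R\to\lambda^*$ as $R\to\infty$. Passing to a subsequence via Ascoli–Arzelà and a diagonal argument gives a locally uniform limit $\phi\in C(\mathbb{R}^N)$ solving \eqref{erg_prob}$_{\lambda^*}$ in the viscosity sense, and elliptic regularity upgrades this to a $C^2$-solution.

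The heart of the matter is the lower bound. Here coercivity of $f$ enters: since $f(x)\to+\infty$ as $|x|\to\infty$, for any $\lambda$ there is $R_\lambda$ such that $f(x)>\lambda$ for $|x|\ge R_\lambda$; since the $\lambda_R$ are bounded (converging to $\lambda^*$), one can choose a single radius $\rho$ such that $f(x)>\lambda_R$ for all $|x|\ge\rho$ and all large $R$. I would then argue that $\phi_R$ cannot have an interior minimum in the annulus $\rho<|x|<R$: at such a point $D\phi_R=0$ and $-\Delta\phi_R\ge 0$, forcing $\lambda_R\ge\lambda_R-\Delta\phi_R+|D\phi_R|^m=f\ge\lambda_R$ with strict inequality on the region where $f>\lambda_R$ — contradiction in the open annulus, and the state-constraint (supersolution) boundary condition on $\partial B_R$ rules out a minimum there as well. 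Hence $\min_{\overline{B_R}}\phi_R$ is attained in $\overline{B_\rho}$. The remaining issue is to bound $\min_{\overline{B_\rho}}\phi_R$ from below \emph{uniformly in $R$}: this follows from the interior Lipschitz estimate for $\phi_R$ on $B_{2\rho}$ (uniform in $R$ by the gradient bounds) together with the normalization $\phi_R(0)=0$, which gives $|\phi_R|\le C\rho$ on $\overline{B_\rho}$. Therefore $\phi_R\ge -C\rho$ on all of $\overline{B_R}$, and this bound passes to the limit, so $\phi\ge -C\rho$ on $\mathbb{R}^N$.

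**Main obstacle.** The delicate point is the justification that $\phi_R$ attains no minimum on the outer boundary $\partial B_R$ and that the "no interior minimum in the annulus" argument is valid in the viscosity framework — i.e.\ testing with the point itself as a $C^2$ test function when $\phi_R$ is only known a priori to be continuous. In the region $|x|\ge\rho$ where we have strict inequality $f>\lambda_R$, the viscosity supersolution property of $\phi_R$ (which holds up to and including $\partial B_R$ by the state-constraint formulation) directly yields, at any minimum point $x_0$ with $|x_0|\ge\rho$, the inequality $\lambda_R - 0 + 0 \ge f(x_0) > \lambda_R$, a contradiction; so in fact no new regularity is needed. One should also double-check that the gradient estimates of the Appendix apply to the stationary (ergodic) equation and not merely the parabolic one — but since those estimates are of Bernstein type and $m\le 2$ is exactly the regime they cover, and the same reduction is used in \cite{barles2016unbounded}, this is routine. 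I would also remark that once $\phi$ is a $C^2$-solution of \eqref{erg_prob}$_\lambda$, it is automatically a subsolution, so $\lambda\le\lambda^*$; combined with $\lambda=\lim\lambda_R=\lambda^*$ this is consistent, and in any case the statement only asserts existence of \emph{some} bounded-from-below $C^2$-solution, which the construction delivers.
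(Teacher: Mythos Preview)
Your proposal is correct and follows essentially the same route as the paper. The paper's own proof is a citation to \cite{barles2016unbounded}, Theorem 2.6, together with a sketch recording that the solution is obtained as the locally uniform $C^2$ limit of the state-constraint pairs $(\lambda_R,\phi_R)$ on $B_R$ (with $\phi_R-\phi_R(0)\to\bar\phi$ and $\lambda_R\searrow\bar\lambda$); your write-up fleshes out precisely this construction, including the key step where coercivity forces $\min_{\overline{B_R}}\phi_R$ to be attained in a fixed ball $\overline{B_\rho}$ via the viscosity supersolution inequality $\lambda_R\ge f(x_0)$ at a minimum point.

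Two small remarks. First, you assert $\lambda_R\to\lambda^*$ and ``necessarily $\lambda=\lambda^*$'' as if immediate; in the paper this identification is postponed to Corollary~\ref{cor_lambdas}, proved via the simplicity Lemma~\ref{lemma_simplicity}. (It can in fact be obtained directly from the extremal characterizations \eqref{def_lambda^*} and \eqref{char_lambda_R}, since any $C^2$ subsolution on $\RN$ restricts to one on $B_R$, giving $\lambda_R\ge\lambda^*$, while $\bar\phi$ being a subsolution at level $\bar\lambda$ gives $\bar\lambda\le\lambda^*$.) Either way, as you note at the end, the theorem only asks for \emph{some} bounded-from-below $C^2$ solution, so this identification is not needed here. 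Second, for the choice of a single $\rho$ you need a uniform upper bound on $\lambda_R$; this is supplied by the monotonicity $\lambda_R\le\lambda_1$ recorded in the paper, rather than by the (not-yet-established) convergence.
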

        
    \begin{proof}
        See Theorem 2.6 in \cite{barles2016unbounded}. For further reference, we note that the desired solution is therein obtained as the limit of solutions to the ergodic problem set in $B_R$. More specifically, if $(\lambda_R, \phi_R)$ satisfy
           \begin{equation}\label{ergodic_SC_BR}
               \left\{ 
                   \begin{array}{cl}
                       \lambda_R -\Delta \phi_R + |D\phi_R|^m = f(x) &  \textrm{in }B_R\\
                       \lambda_R -\Delta \phi_R + |D\phi_R|^m \geq f(x) &  \textrm{on }\partial B_R.
                   \end{array}
               \right.
           \end{equation}
        then for some $\bar\lambda\in \R$ and $\bar\phi\in C^2$, $\lambda_R\searrow \bar\lambda$ and $\phi_R - \phi_R(0)\to \bar\phi$ locally uniformly in $C^2$. By stability, it follows that $(\bar\lambda, \bar\phi)$ is a $C^2$-solution of \eqref{erg_prob}. Furthermore, we stress that the sequence $(\lambda_R)$ is nonincreasing with respect to $R$ and that for all $R>0$, $\lambda_R\geq \min_{\RN} f$ (thus $\bar\lambda\geq \min_{\RN} f$ as well). Both facts follow from the extremal characterization of $\lambda_R$, analogous to \eqref{def_lambda^*}; namely
            \begin{equation}\label{char_lambda_R}
                \lambda_R = \sup \{\lambda  \ | \ \exists u: \ \lambda u - \Delta u + |Du|^m \leq f(x) \text{ in } B_R \}.
            \end{equation}
    \end{proof}
    
    \begin{proposition}\label{prop_low_conv_R}
        Denote by $(\psi_R, \nu_R)$ the solution pair of
            \begin{equation*}
                \lambda + \Delta \psi + |D\psi|^m = f_R \quad\textrm{in } \RN/ 2S_R\mathbb{Z}^N,
            \end{equation*}
        where $S_R$ is chosen so that $f(x) \geq R$ for $|x|\geq S_R$ and $f_R$ is the periodic extension of $\min\{f, R\}$ to $\RN/2S_R\mathbb{Z}^N$.
        
        Then $\nu_R\to \bar\nu$ and $\psi_R - \psi_R(0)\to \bar{\psi}$ locally uniformly in $C^2$, where $(\bar\psi, \bar\nu)$ are a solution of \eqref{erg_prob}, and $\bar{\psi}$ is bounded from below.
    \end{proposition}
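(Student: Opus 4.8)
The plan is to obtain $(\bar\psi,\bar\nu)$ from the periodic problems exactly as the bounded-from-below solution of \eqref{erg_prob} is obtained in Theorem \ref{thm_bar_lambda} from the state-constraint approximations on $B_R$; the one genuinely new point is to transfer the lower bound through the periodic approximation, which I will do by combining the coercivity of $f$ with an $R$-uniform bound on $\nu_R$. I first record that $(\psi_R,\nu_R)$ is well defined: since the torus $\RN/2S_R\mathbb{Z}^N$ is compact and $f_R$ is Lipschitz on it (note $f_R$ equals the constant $R$ near the periodicity lattice, as $f\ge R$ there), the standard ergodic cell-problem theory produces a unique $\nu_R\in\R$ and, by elliptic regularity, a $\psi_R\in C^{2,\alpha}$, unique up to an additive constant. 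Throughout, recall the normalization is the subtraction of $\psi_R(0)$.

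Next I would prove $R$-uniform bounds on $\nu_R$. Evaluating the equation at a minimum point of $\psi_R$, where $D\psi_R=0$ and $\Delta\psi_R\ge 0$, gives $\nu_R\ge\min_{\RN}f_R=\min_{\RN}f$ for $R\ge\min_{\RN}f$. For the upper bound, view $\psi_R$ as a periodic function on $\RN$ and restrict it to $B_{S_R}$: there it satisfies $\nu_R-\Delta\psi_R+|D\psi_R|^m=f_R\le f$, hence is a $C^2$-subsolution of \eqref{erg_prob}$_{\nu_R}$ on $B_{S_R}$, and the extremal characterization \eqref{char_lambda_R} forces $\nu_R\le\lambda_{S_R}$. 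Since $f$ is coercive, $S_R\to+\infty$, so by Theorem \ref{thm_bar_lambda} the sequence $(\lambda_{S_R})$ is nonincreasing with limit $\bar\lambda$; in particular $\nu_R\le\lambda_{S_{R_0}}=:\bar\Lambda$ for all $R\ge R_0$, and $\limsup_{R}\nu_R\le\bar\lambda$.

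The technical core is a \emph{local, $R$-independent} gradient estimate. Fix $\rho>0$; for $R$ large enough that $f_R\equiv f$ on $B_{2\rho}$, the equation reads $-\Delta\psi_R+|D\psi_R|^m=f-\nu_R$ on $B_{2\rho}$, with right-hand side bounded in $W^{1,\infty}(B_{2\rho})$ uniformly in $R$ by the previous step and \ref{assmp:growth_f}. The interior gradient bound for uniformly elliptic equations with a coercive superlinear gradient term (Theorem \ref{gradBound} and Corollary \ref{gradBoundCor}; see also \cite{barles2016unbounded}), which does not involve $\|\psi_R\|_\infty$, then yields $|D\psi_R|\le C_\rho$ on $B_\rho$ with $C_\rho$ independent of $R$. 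Since $\psi_R-\psi_R(0)$ vanishes at the origin this gives $|\psi_R(x)-\psi_R(0)|\le C_\rho|x|$ on $B_\rho$, so $\psi_R-\psi_R(0)$ is bounded in $C^1(B_\rho)$ uniformly in $R$, and Schauder estimates promote this to uniform $C^{2,\alpha}_{\mathrm{loc}}$ bounds. For the global lower bound, let $\underline x_R$ be a minimum point of $\psi_R$ on the torus; the equation at $\underline x_R$ gives $f_R(\underline x_R)\le\nu_R\le\bar\Lambda$, hence $\underline x_R\in K:=\{f\le\bar\Lambda\}$ for $R>\bar\Lambda$, and $K$ is a \emph{fixed} compact set by coercivity of $f$. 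Choosing $\rho_0$ with $K\subset B_{\rho_0}$, the estimate above gives $\psi_R(\underline x_R)-\psi_R(0)\ge -C_{\rho_0}\rho_0$, and since $\underline x_R$ is a global minimum this forces $\psi_R-\psi_R(0)\ge -C_{\rho_0}\rho_0$ on the whole torus, uniformly in $R$.

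Finally, by the uniform $C^{2,\alpha}_{\mathrm{loc}}$ bounds, Arzel\`a--Ascoli and a diagonal argument, along a subsequence $\psi_R-\psi_R(0)\to\bar\psi$ in $C^2_{\mathrm{loc}}$ and $\nu_R\to\bar\nu$; since $f_R\to f$ locally uniformly, stability yields $\bar\nu-\Delta\bar\psi+|D\bar\psi|^m=f$ in $\RN$, and the lower bound passes to the limit, so $(\bar\psi,\bar\nu)$ is a bounded-from-below $C^2$-solution of \eqref{erg_prob}. To upgrade to full convergence one observes that every subsequential limit is such a solution and invokes the uniqueness of the ergodic constant together with the simplicity of bounded-from-below solutions (Propositions \ref{prop_simplicity} and \ref{prop_simplicity_below}, using \ref{assmp:growth_Df}); since the normalization fixes the value at the origin, the limit pair is unique, giving $\nu_R\to\bar\nu$ and $\psi_R-\psi_R(0)\to\bar\psi$. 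I expect the two delicate points to be the $R$-uniform interior gradient estimate in the subquadratic range (one must be certain that neither the truncation level $R$ nor $\|\psi_R\|_\infty$ enters) and the localization of the minimum of $\psi_R$ in an $R$-independent compact set, which is precisely what lets the lower bound survive the limit; the passage from subsequential to full convergence is comparatively soft.
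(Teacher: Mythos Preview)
The paper does not prove this proposition at all; it simply cites \cite{barles2016unbounded}, Proposition 4.2. Your outline is essentially a reconstruction of that argument and is correct in its structure: well-posedness of the periodic cell problem, two-sided bounds on $\nu_R$ (minimum-point argument for the lower bound, the extremal characterization \eqref{char_lambda_R} on $B_{S_R}$ for the upper bound), interior gradient estimates independent of $\|\psi_R\|_\infty$, localization of the global minimizer of $\psi_R$ in the fixed sublevel set $\{f\le\bar\Lambda\}$ to get the uniform lower bound, and then Arzel\`a--Ascoli plus stability.

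There is, however, one genuine circularity you should repair. In your last step you invoke Proposition \ref{prop_simplicity_below} to upgrade subsequential convergence to full convergence, but in this paper Proposition \ref{prop_simplicity_below} is proved \emph{using} the very objects $(\psi_R,\nu_R)$ constructed in Proposition \ref{prop_low_conv_R}. You do not actually need Proposition \ref{prop_simplicity_below}: once you know every subsequential limit $(\bar\psi,\bar\nu)$ is a bounded-from-below solution of \eqref{erg_prob}, Theorem \ref{thm_uniqueness} forces $\bar\nu=\lambda^*$, and then Proposition \ref{prop_simplicity} (whose proof relies only on the state-constraint approximation $(\lambda_R,\phi_R)$ and is logically prior to Proposition \ref{prop_low_conv_R}) gives $\bar\psi=\phi+c$; the normalization $\bar\psi(0)=0$ fixes $c$. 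So drop the reference to Proposition \ref{prop_simplicity_below} and keep only Theorem \ref{thm_uniqueness} and Proposition \ref{prop_simplicity}. Note that this step uses \ref{assmp:growth_Df}; if you want the statement exactly as in \cite{barles2016unbounded} under \ref{assmp:growth_f} alone, the honest conclusion is convergence along subsequences to \emph{some} bounded-from-below solution pair, which is in any case all the paper ever needs downstream.
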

    
    \begin{proof}
        See \cite{barles2016unbounded}, Proposition 4.2.
    \end{proof}
        
    \medskip
    
    \begin{lemma}\label{lemma:supersol_growth} 
        Assume \emph{\ref{assmp:growth_f}}.
            \begin{enumerate}[label=\textit{(\roman*)}]
                \item If $\chi$ is a bounded-from-below supersolution of \eqref{erg_prob}, then
                 			\begin{equation*}
                 				\lim\limits_{|x|\to +\infty} \frac{\chi(x)}{|x|} = +\infty.
                 			\end{equation*}
               \item Let $t_0>0$. If $v$ is a bounded-from-below supersolution of \eqref{vhj_whole}, then 
                \begin{equation*}
                    \lim\limits_{|x|\to +\infty} \frac{v(x, t_0)}{|x|} = +\infty.
                \end{equation*}
            \end{enumerate}
       	\end{lemma}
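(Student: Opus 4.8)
The plan is to reduce each statement to comparison against an explicit (piecewise-)linear barrier on a bounded region --- annuli for (i), space--time cylinders for (ii) --- the key point being that along a cone $K|x|$ the gradient term $|D(K|x|)|^m=K^m$ is bounded, so coercivity of $f$ eventually beats it. The lack of a comparison principle on $\R^N$ is harmless because every such comparison lives on a bounded set.

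For (i), I would normalise $\chi\ge0$ and let $\lambda\in\R$ be the constant for which $(\lambda,\chi)$ is a (lower semicontinuous) supersolution of \eqref{erg_prob}. Given $K>0$, use \ref{assmp:growth_f} to pick $R_K$ with $f\ge\lambda+K^m+1$ on $\{|x|\ge R_K\}$, and set $\psi_K(x):=K|x|-KR_K$. On $\{|x|>R_K\}$,
\[
\lambda-\Delta\psi_K+|D\psi_K|^m=\lambda+K^m-\tfrac{K(N-1)}{|x|}\le\lambda+K^m\le f(x)-1,
\]
so $\psi_K$ is a strict classical subsolution of \eqref{erg_prob} there, and $\psi_K=0$ on $\{|x|=R_K\}$. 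For $R>R_K$ let $A_R:=\{R_K<|x|<R\}$; the USC function $\psi_K-\chi$ attains its maximum $M$ over $\overline{A_R}$, and since $\psi_K-\chi\le0$ on $\{|x|=R_K\}$, if $M>0$ it is attained at some $\bar x$ with $|\bar x|>R_K$. There $\psi_K-M$ is a smooth test function touching $\chi$ from below, so the supersolution inequality for $\chi$ at $\bar x$ contradicts the strict subsolution inequality for $\psi_K$ (no doubling of variables is needed since $\psi_K$ is smooth; strictness replaces the missing zeroth-order term). Hence $M\le0$, i.e.\ $\chi\ge K|x|-KR_K$ on $\{|x|\ge R_K\}$; letting first $R\to+\infty$ and then $K\to+\infty$ yields $\chi(x)/|x|\to+\infty$. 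The value of $\chi$ on $\partial B_R$ never enters because $\partial B_R\subset\{|x|>R_K\}$, where $\chi$ is a genuine supersolution.

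For (ii) I would follow the same pattern with a space--time barrier. By Theorem \ref{thm_bar_lambda} fix a bounded-from-below $C^2$ solution $(\bar\lambda,\bar\phi)$ of \eqref{erg_prob}; by (i), $\bar\phi(x)/|x|\to+\infty$, so it suffices to bound $v(\cdot,t_0)$ below by $\bar\phi$ up to an additive constant. The function $\bar\phi(x)+\bar\lambda t$ is a classical solution --- hence a subsolution --- of \eqref{vhj_whole}, and $v$ is a supersolution that is genuine on all of $\R^N\times(0,+\infty)$ (so the outer sphere $\partial B_R$ is again harmless). Running the comparison of $\bar\phi+\bar\lambda t$ against $v$ on a cylinder $\overline{A_R}\times[\epsilon,t_0]$ requires, besides $\bar\phi+\bar\lambda t\le v$ on the lateral face $\{|x|=R_K\}$ (arranged by subtracting a large constant), the inequality $\bar\phi+\bar\lambda\epsilon\le v(\cdot,\epsilon)$ on the bottom face --- and this is the one point needing extra input, since a priori we control $v(\cdot,\epsilon)$ only by $\inf v$. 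In the situations where this lemma is applied, $v$ is in fact a supersolution of \eqref{vhj_whole}--\eqref{initialData_whole} whose initial trace already dominates a translate of an ergodic solution, which closes the bottom face; one then gets $v\ge\bar\phi+\bar\lambda t-C$ on $\overline{A_R}\times[\epsilon,t_0]$, lets $R\to+\infty$, sets $t=t_0$, and invokes (i). (Absent such control one would instead interpolate in time between a constant at $t=\epsilon$ and $\bar\phi$ at $t=t_0$ and verify the subsolution property directly, using $-\Delta\bar\phi=f-\bar\lambda-|D\bar\phi|^m$, $0\le1-\eta\le1$, \ref{assmp:growth_f} and \ref{assmp:growth_Df} to compare $\bar\phi$ with $|D\bar\phi|$ --- but the time-derivative created by such an interpolation is the delicate term.)

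The main obstacle is therefore the space--time barrier in (ii): one must leave the only-boundedly-controlled initial data of $v$ and climb to the superlinear ergodic profile at time $t_0$ while remaining a subsolution of \eqref{vhj_whole}, a balance governed by the homogeneity of $|Dv|^m$ and by $f$ (together with \ref{assmp:growth_Df}) dominating bounded quantities at infinity. Part (i), by contrast, is elementary once the cone barrier is in hand.
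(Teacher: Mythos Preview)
Your barrier argument for (i) is a genuinely different---and in spirit more elementary---route than the paper's. The paper argues by contradiction via a blow-up: it rescales around a bad sequence $y_n$, passes to a half-relaxed liminf (after a truncation $\min(v_n,K(1-|y|))$ to force $L^\infty$ bounds), and lands on a supersolution of the eikonal equation $|D\tilde v|\ge \tfrac12$ on $B_1$, which is then compared with the distance function. This rescaling handles (i) and (ii) \emph{uniformly}; the parabolic terms simply drop out in the limit, so no separate treatment of the initial data is needed.

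Your argument for (i), however, has a gap at the outer boundary that your remark does not close. You assert that if $M=\max_{\overline{A_R}}(\psi_K-\chi)>0$ then you can test at the maximiser $\bar x$; but nothing excludes $|\bar x|=R$, and there $\psi_K-M$ only touches $\chi$ from below on $\overline{A_R}$, not on a full neighbourhood in $\RN$, so the viscosity supersolution inequality for $\chi$ is not available. Saying ``$\chi$ is a genuine supersolution on $\partial B_R$'' does not help: the obstruction is on the test-function side, not on where the equation holds. (A standard cure is to replace $\psi_K$ by $\psi_K-\delta\Phi$ with $\Phi$ smooth and coercive, forcing the supremum over $\{|x|\ge R_K\}$ to be attained in the interior, and then letting $\delta\to0$; but this must be checked to remain a strict subsolution near the maximiser.)

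For (ii) the gap is the one you yourself flag, and it is the heart of the matter. The lemma is stated for \emph{any} bounded-from-below supersolution $v$, with no assumption on $v(\cdot,0)$ beyond the lower bound; your first route (``in the situations where this lemma is applied, the initial trace already dominates a translate of an ergodic solution'') simply changes the hypotheses. Your second route (interpolate in time from a constant at $t=\epsilon$ to $\bar\phi$ at $t=t_0$) is the right instinct, but you do not carry it out, and as you note the time-derivative of such an interpolant is exactly what competes with $f$ and $|D\bar\phi|^m$. The paper's rescaling sidesteps this entirely: the parabolic term is multiplied by $\epsilon_n\to0$ and disappears in the limit, so the passage from bounded initial data to superlinear spatial growth at $t_0$ is absorbed into the eikonal comparison rather than engineered by a barrier.
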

     	
     	\begin{proof}  We will prove only \textit{(ii)}, since part \textit{(i)} is analogous. Define, for all $r\geq 0$, $\varphi(r) = \inf_{\RN\backslash B_r} f$. Observe that $\varphi$ is non-decreasing and, for all $x\in \RN$, $f(x)\geq \varphi(|x|)$. Furthermore, assumption \ref{assmp:growth_f} implies $\varphi$ is coercive. We will argue by contradiction. 
         
         Assume that there exists some $t_0>0$ and a sequence $(y_n)_{n\in \mathbb{N}}$ with $|y_n| \to +\infty$ as $n\to +\infty$ such that $\nicefrac{v(y_n, t_0)}{|y_n|}$ remains bounded as $n\to+\infty$. Since $\varphi$ is coercive, this implies 
    	 		\begin{equation}\label{v_low_contrad}
    		 		\liminf\limits_{n\to +\infty} \frac{v(y_n, t_0)}{|y_n|\varphi(y_n)^{\frac{1}{m}}} = 0.
    	 		\end{equation}
     		Define for $y \in B_1$,
    	 		\begin{equation*}
    		 		v_n(y, s) = \frac{v(y_n + \frac{|y_n|}{2} y, t_0 + \frac{|y_n|^2}{4}s)}{|y_n| \varphi(\frac{1}{2}|y_n|)^{\frac{1}{m}}}.
    	 		\end{equation*}
    	 		
     		Formally, $v_n$ satisfies
    	 		\begin{align*}
    		 		&{}2^{2-m} |y_n|^{-1} \varphi(\frac{1}{2}|y_n|)^{\frac{1}{m} - 1} \left(\partial_t v_n -\Delta v_n\right) \, + \, |Dv_n|^m\\
    		 		&{}\quad = 2^{-m}\varphi(\frac{1}{2}|y_n|)^{-1}f\left(y_n + \frac{|y_n|}{2}y\right)\geq  2^{-m}
    	 		\end{align*}
     		where the last inequality follows from the definition of $\varphi$ and the fact that $|y_n + \frac{|y_n|}{2}y| \geq |y_n| - \frac{|y_n|}{2}|y| = \frac{1}{2}|y_n|$; it is not hard to pass the computation onto a suitable test function. Writing $\epsilon_n= 2^{2-m} |y_n|^{-1} \varphi(\frac{1}{2}|y_n|)^{\frac{1}{m} - 1}$, we have $\epsilon_n\to 0$ since $\varphi$ is coercive and $1 - \frac{1}{m} < 0$, hence
    	 		\begin{equation}\label{vn_eqn}
    		 		\epsilon_n ( \partial_t v_n - \Delta v_n) \, + \, |Dv_n|^m \geq 2^{-m} -o_n(1) \quad \hbox{in  }B_1 \times (t_0,+\infty),
    	 		\end{equation}
     		where $o_n(1)\to 0$ as $n\to \infty$.
     		
     		In order to pass to the limit, we lack some $L^\infty$-bound on $v_n$. To overcome this difficulty, we set
    	 		\begin{equation*}
    		 		\tilde{v}_n (y,s) = \min\left (v_n (y,s), K(1-|y|)\right) \quad \hbox{in  }B_1 \times (0,+\infty),
    	 		\end{equation*}
     		for some suitably large $K$. For $n$ large enough, the concave function $y \mapsto K(1-|y|)$ is also a supersolution of \eqref{vn_eqn} and therefore so is $\tilde  v_n$, as the minimum of two supersolutions.
     		
     		Hence, the half-relaxed limit $\tilde v = \liminf\limits_{n\to\infty}\,\!\!^* \,\tilde v_n$ is well defined and, by stability, we have in the limit $n\to +\infty$ that
    	 		\begin{equation*}
    		 		|D\tilde v|^m \geq 2^{-m} \quad \textrm{in } B_1\times (0,+\infty),
    	 		\end{equation*}
     		in the viscosity sense. For all $n$, $\tilde v_n \geq 0$, hence also $\tilde v\geq 0$ (recall that $v\geq 0$ may be assumed at the outset). Thus $\tilde v$ is a supersolution of the eikonal equation, $|Du|^m = 2^{-m}$ in $B_1$ with homogeneous boundary condition. The latter has the unique solution $\frac{1}{2}d(y, \partial B_1)$. Therefore, by comparison, $\tilde v(y,s) \geq \frac{1}{2}d(y, \partial B_1)$ for all $y\in B_1, \ s\geq 0$. We remark that comparison holds up to $s=0$ by the (more general) results of \cite{da2004remarks}.
             
            Finally, using \eqref{v_low_contrad} we have
    	 		\begin{equation*}
    		 		0 \leq \tilde v(0,0) \leq  \liminf\limits_{n\to +\infty} \frac{v(y_n,t_0)}{|y_n| \varphi(\frac{1}{2}|y_n|)^{\frac{1}{m}}} = 0.
    	 		\end{equation*}
     		This implies $0 = \tilde v(0,0) \geq \frac{1}{2}d(0, \partial B_1) = \frac{1}{2}$, a contradiction.
     	\end{proof}

        \begin{lemma}[``Simplicity'']\label{lemma_simplicity}
            Let $(\bar\lambda, \bar\phi)$ denote the solution pair of \eqref{erg_prob} obtained in Theorem \ref{thm_bar_lambda} and $v\in USC(\RN)$ be a bounded-from-below subsolution of \eqref{erg_prob}$_{\bar\lambda}$. Then, there exists $c\in \R$ such that $v(x) = \phi(x) + c$ for all $x\in \RN$.
        \end{lemma}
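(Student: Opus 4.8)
The plan is to prove the identity in two steps: first the one-sided bound $v \le \bar\phi + c$ for a constant $c$ chosen so that $v$ and $\bar\phi + c$ are in contact at some $x_0 \in \RN$, and then a strong maximum principle that promotes this contact to a global equality. We sketch a self-contained argument; it is in fact the argument behind Proposition \ref{prop_simplicity}, and indeed, once one knows $\bar\lambda = \lambda^*$, the lemma is the particular case $\chi = \bar\phi$ of that proposition.

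For the one-sided bound I would use the construction of $\bar\phi$ from Theorem \ref{thm_bar_lambda}: $\phi_R - \phi_R(0) \to \bar\phi$ locally uniformly in $C^2$, where $(\lambda_R,\phi_R)$ solves the state-constraints problem \eqref{ergodic_SC_BR} on $B_R$ and $\lambda_R \searrow \bar\lambda \ge \min_{\RN} f$. Since $v$ is a subsolution of \eqref{erg_prob}$_{\bar\lambda}$ with $\bar\lambda \le \lambda_R$, it is not directly a subsolution of \eqref{erg_prob}$_{\lambda_R}$; I would first replace $v$ by an $m$-homogeneous perturbation $v_\eta$ — obtained by a rescaling that keeps the coefficient of $|Dv|^m$ equal to $1$ while shrinking the coefficient of the elliptic term, exactly as in the proof of Lemma \ref{lemma:supersol_growth} — which is a strict subsolution of \eqref{erg_prob}$_{\lambda_R}$ on $B_R$. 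Comparing $v_\eta$ with the maximal state-constraints solution $\phi_R$ (cf. the extremal characterization \eqref{char_lambda_R}), the strictness rules out an interior contact of $v_\eta - \phi_R$, so $\sup_{B_R}(v_\eta - \phi_R)$ is attained on $\partial B_R$; the coercivity of $f$ keeps $\sup_{\partial B_R}\big(v_\eta - (\phi_R - \phi_R(0))\big)$ bounded as $R\to\infty$, which upon letting $\eta \to 0$ and $R\to\infty$ yields $v \le \bar\phi + c$ with $c := \sup_{\RN}(v - \bar\phi) \in \R$.

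Next, that this supremum is attained: if it were only approached along a sequence $y_n \to \infty$, I would blow up $v - \bar\phi$ around $y_n$ with the normalization of Lemma \ref{lemma:supersol_growth}, so that in the limit the elliptic term drops out and one obtains a supersolution of an eikonal equation on a ball with zero boundary data; comparison with its explicit solution, together with the superlinear growth of $\bar\phi$ from Lemma \ref{lemma:supersol_growth}\textit{(i)} and the coercivity of $f$, produces a contradiction. Hence $v - \bar\phi$ is bounded away from $c$ outside a large ball, and $c$ is attained at some $x_0 \in \RN$.

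Finally, set $z := \bar\phi + c - v \ge 0$; then $z$ is lower semicontinuous and $z(x_0)=0$. Testing the subsolution inequality for $v$ against $\bar\phi + c - \varphi$, for $\varphi\in C^2$ touching $z$ from below, and subtracting the equation satisfied by $\bar\phi\in C^2$, one obtains $-\Delta\varphi + L\,|D\varphi|\ge 0$ at the contact point, $L$ being a local Lipschitz constant of $p\mapsto |p|^m$; thus $z$ is, near every point, a viscosity supersolution of the uniformly elliptic inequality $-\Delta z + L\,|Dz|\ge 0$, and the strong maximum principle forces $z\equiv 0$ near $x_0$, hence on all of $\RN$ by connectedness. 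The main obstacle is the one-sided bound together with the attainment of the supremum; this is where the bounded-domain approximation, the homogeneity-based perturbation, and the coercivity of $f$ all enter, just as in Proposition \ref{prop_simplicity}.
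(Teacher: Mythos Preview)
Your overall plan---one-sided bound, attainment, then strong maximum/comparison---matches the paper's, and Step~3 is fine. The gap is in Step~1, and it propagates to Step~2.

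In Step~1 you try to perturb the \emph{subsolution} $v$ into a strict subsolution of \eqref{erg_prob}$_{\lambda_R}$ and then argue that $\sup_{B_R}(v_\eta-\phi_R)$ is pushed to $\partial B_R$. Two things fail here. First, $v$ is a subsolution at level $\bar\lambda\le\lambda_R$; an $m$-homogeneous (multiplicative) perturbation $v_\eta=\eta v$, $\eta<1$, gives $-\Delta v_\eta+|Dv_\eta|^m\le\eta(f-\bar\lambda)$, which is \emph{not} $\le f-\lambda_R$ near points where $f$ is small, so $v_\eta$ is not a strict subsolution of \eqref{erg_prob}$_{\lambda_R}$ on all of $B_R$. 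The spatial rescaling of Lemma~\ref{lemma:supersol_growth} that you invoke is a singular limit (the elliptic coefficient $\epsilon_n\to0$), not a perturbation that produces a strict subsolution at a fixed level. Second, even if you had such a $v_\eta$, the state-constraints solution $\phi_R$ blows up on $\partial B_R$, so $v_\eta-\phi_R\to-\infty$ there; the supremum \emph{cannot} be attained on the boundary, and your ``strictness rules out interior contact'' would then say there is no maximum at all, contradicting upper semicontinuity. The paper's device goes the other way: perturb the \emph{supersolution} by setting $\phi_R^{\mu_R}=\mu_R\phi_R$ with $\mu_R=1+\lambda_R-\bar\lambda>1$; then $-\Delta\phi_R^{\mu_R}+|D\phi_R^{\mu_R}|^m\ge\mu_R(f-\lambda_R)$, the interior maximum $x_R$ of $v-\phi_R^{\mu_R}$ exists (blow-up at $\partial B_R$), and testing/subtracting gives $(\mu_R-1)f(x_R)\le\mu_R\lambda_R-\bar\lambda$, hence $f(x_R)\le1+\lambda_1$. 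Coercivity of $f$ then traps $x_R$ in a fixed ball, and $R\to\infty$ yields $v\le\bar\phi+c$.

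Your Step~2 also does not work as written: $v-\bar\phi$ does not itself satisfy an equation to which the eikonal blow-up of Lemma~\ref{lemma:supersol_growth} applies, and normalizing a \emph{bounded} quantity by something tending to $+\infty$ gives zero in the limit, not a nontrivial supersolution. The paper again uses the multiplicative trick: with $\bar\phi^\mu=\mu\bar\phi$, $\mu>1$, the bound from Step~1 together with the coercivity of $\bar\phi$ (Lemma~\ref{lemma:supersol_growth}\textit{(i)}) forces $v-\bar\phi^\mu\to-\infty$ at infinity, so a global maximum $x^\mu$ exists; the same subtraction bounds $f(x^\mu)$ independently of $\mu$, and sending $\mu\to1^+$ produces a genuine contact point $\hat x$ for $v-\bar\phi$. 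From there the strong comparison principle finishes, as you do.
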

        
        \begin{proof}
            % INEQUALITY
            We will first establish the following
                \begin{equation}\label{simplicity_ineq}
                    \textrm{there exists } c\in \R \textrm{ such that } {v}(x) \leq \phi(x) + c\quad \textrm{for all }x\in \RN.
                \end{equation}
            To this end, we first prove that $\min_{\bar B_R} \phi_R -{v}$ is bounded from below, uniformly in $R>0$. Let $\mu >1$---to be chosen later---and define $\phi_R^\mu = \mu\phi_R$, where $(\lambda_R, \phi_R)$ are defined as in the proof of Theorem \ref{thm_bar_lambda}. So defined, $\phi_R^\mu$ is a solution of
                \begin{equation*}
                    -\Delta \phi_R^\mu + \mu^{1-m}|D\phi_R^\mu|^m = \mu(f(x) - \lambda_R)\quad\textrm{in } B_R.
                \end{equation*}
            Since $\mu>1$ and $1-m<0$, we have $\mu^{1-m}\leq 1$, thus
                \begin{equation}\label{eq_mu_R_v}
                    -\Delta \phi_R^\mu + |D\phi_R^\mu|^m \geq \mu(f(x) - \lambda_R)\quad\textrm{in } B_R.
                \end{equation}
            
            Set $\mu = \mu_R := 1 + \lambda_R - \bar\lambda$. By the results of \cite{barles2016unbounded} (see also the proof of Theorem \ref{thm_bar_lambda} above), we indeed have that $\mu_R > 1$, as well as $\mu_R\searrow 1$ as $R\to+\infty$. 
            
            Consider $x^R \in \mathrm{argmax}_{\overline{B}_R} (v- \phi_R^{\mu_R} )$. This maximum is achieved in $B_R$ because $v\in USC$ and $\phi_R^{\mu_R}\to +\infty$ as $x\to \partial B_R$. Since $v$ is a subsolution of \eqref{erg_prob}$_{\bar\lambda}$ and $\phi_R^{\mu_R}\in C^2$, we have
                \begin{equation*}
                    \bar\lambda - \Delta \phi_R^{\mu_R}(x_R) + |D\phi_R^{\mu_R}(x_R)|^m \leq f(x_R).
                \end{equation*}
            Evaluating \eqref{eq_mu_R} at $x_R$ and subtracting from the previous equation we obtain $0 \geq (\mu_R - 1)f(x^R) - \mu_R\lambda_R + \bar\lambda$. From the definition of $\mu_R$, we have
                \begin{align*}
                    &(\mu_R - 1)f(x^R) \leq  \mu_R\lambda_R - \bar\lambda = (1 + \lambda_R - \bar\lambda) \lambda_R - \bar\lambda\\
                    &\quad = (\lambda_R -\bar{\lambda})(1 + \lambda_R),
                \end{align*}
            hence
                \begin{align}\label{armin_ineq_1}
                    f(x_R) \leq 1+\lambda_R \leq 1 + \lambda_1.
                \end{align}
            Here we have used the monotonicity of $\lambda_R$ (to clarify, $\lambda_1$ denotes the ergodic constant for the domain $B_1$). Since the right-hand side is bounded uniformly with respect to $R$ and $f$ is coercive, $x_R$ must remain in a bounded set as $R\to+\infty$, say $B_{\bar{R}}$ for some $\bar{R}>0$. We stress that $\bar{R}$ is independent of $R$ (thus also of $\mu_R$).
            
            Therefore, for $R>\bar{R}$,
                \begin{align*}
                    &\max_{B_R} (v - \phi_R^{\mu_R}) = (v - \phi_R^{\mu_R})(x_R)  = \max_{B_{\bar{R}}} (v-\phi_R^{\mu_R}) \leq \left(\max_{B_{\bar{R}}} (v-\bar\phi) \right) - 1  =: c,
                \end{align*}
            where we have used that $\phi_R^{\mu_R}\to \bar\phi$ uniformly over $B_{\bar{R}}$ as $R\to+\infty$ (recall $\mu_R\to 1$ in said limit). This implies that
                \begin{equation*}
                     v(x) \leq \phi_R^{\mu_R}(x) + c \quad\textrm{for all } x\in \RN.
                \end{equation*}
            For any fixed $x \in \RN$, sending $R\to +\infty$ leads to \eqref{simplicity_ineq}, noting $c$ is independent of $R$.\\
            
            % 	EQUALITY
            We now define $\bar{\phi}^\mu := \mu \bar{\phi}$, where again $\mu>1$; thus,
                \begin{equation}\label{eq_phi_mu}
                    -\Delta\bar{\phi}^\mu + |D\bar{\phi}^\mu| \geq \mu (f(x) - \bar\lambda) \quad \textrm{in }\RN.
                \end{equation}
            Using \eqref{simplicity_ineq}, we have that
                \begin{equation*}
                    v(x) - \bar\phi^{\mu}(x) = v(x) - \bar\phi(x) + (1-\mu)\bar\phi(x) \leq c + (1-\mu)\bar\phi(x) \quad \textrm{for all } x\in \RN.
                \end{equation*}
            Since $\bar\phi$ is coercive (by Lemma~\ref{lemma:supersol_growth}) and $1-\mu<0$, it follows that $v(x) - \bar\phi^\mu(x) \to -\infty$ as $|x|\to+\infty$. Therefore, $v-\bar\phi^\mu$ attains a global maximum at some $x^\mu$, with $x^\mu$ possibly depending on $\mu$. However, proceeding as before, we obtain that 
                \begin{equation*}
                    \bar\lambda - \Delta\bar{\phi}^\mu(x^\mu) + |D\bar{\phi}^\mu(x^\mu)|^m \leq f(x^\mu).
                 \end{equation*}
           Subtracting from \eqref{eq_phi_mu} and using the coercivity of $f$, we again have that $x^\mu$ remains in some ball $B_{\tilde{R}}$, with $\tilde{R}$ depending only on $f$. Hence, for all $\mu>1$, $x^\mu \in \overline{B}_R$ and (up to a subsequence) $x^\mu\to \hat{x}$ for some $\hat{x}\in \overline{B}_{\tilde{R}}$.
                
            Recalling now that each $x^\mu$ is a global maximum, we have, for all $\mu>1$,
                \begin{equation*}
                    \bar{v}(x) - \bar\phi^\mu(x) \leq v(x^\mu) - \bar\phi^\mu (x^\mu) \quad\textrm{for all } x\in \RN.
                \end{equation*}
            Fixing $x\in\RN$ and taking the limit $\mu\to 1^+$ in the previous inequality, we obtain
                \begin{equation*}
                    \bar{v}(x) - \bar\phi(x) \leq \bar{v}(\hat{x}) - \phi(\hat{x}) \quad\textrm{for all } x\in \RN,
                \end{equation*}
            recalling that $v\in USC$. We thus have that $\bar v$ and $\bar\phi + \bar{v}(\hat{x}) - \phi(\hat{x})$ are respectively sub- and supersolution of \eqref{erg_prob}$_{\bar\lambda}$, satisfy $\bar v \leq \bar\phi + \bar{v}(\hat{x}) - \phi(\hat{x})$ in $\RN$, and are equal at $\hat{x}$. Therefore, they are equal by the strong comparison principle (see e.g.~\cite{giga2005strong}, Theorem 3.1 and Remark 3.6), and with this we conclude.
        \end{proof}
        
        \begin{corollary}\label{cor_lambdas}
            Using the notation of Theorem \ref{thm_lambda^*} and the proof of Theorem \ref{thm_bar_lambda}, $\bar\lambda = \lambda^*$.
        \end{corollary}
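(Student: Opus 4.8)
The plan is to establish the two inequalities $\bar\lambda \le \lambda^*$ and $\lambda^* \le \bar\lambda$ separately, each by appealing to an extremal (supremum over subsolutions) characterization of the corresponding ergodic constant.

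For $\bar\lambda \le \lambda^*$ I would simply observe that the pair $(\bar\lambda,\bar\phi)$ furnished by Theorem \ref{thm_bar_lambda} is a genuine $C^2$-solution of \eqref{erg_prob}$_{\bar\lambda}$, hence in particular $\bar\phi$ is a $C^2$-subsolution of \eqref{erg_prob}$_{\bar\lambda}$ on all of $\RN$. By the very definition \eqref{def_lambda^*} of $\lambda^*$ as the supremum of those $\lambda$ for which \eqref{erg_prob}$_\lambda$ admits a $C^2$-subsolution, this forces $\bar\lambda \le \lambda^*$.

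For the reverse inequality I would take an arbitrary $\lambda\in\R$ admissible in \eqref{def_lambda^*}, i.e.\ such that \eqref{erg_prob}$_\lambda$ has a $C^2$-subsolution $w$ on $\RN$, and restrict $w$ to each ball $B_R$. Since no boundary condition is imposed on $B_R$ in the characterization \eqref{char_lambda_R}, the restriction of $w$ is still an admissible competitor there, so \eqref{char_lambda_R} gives $\lambda \le \lambda_R$ for every $R>0$. Letting $R\to+\infty$ and using that $\lambda_R\searrow\bar\lambda$ (Theorem \ref{thm_bar_lambda}) yields $\lambda\le\bar\lambda$; taking the supremum over all admissible $\lambda$ gives $\lambda^*\le\bar\lambda$, and combining the two bounds concludes.

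I do not expect a real obstacle here: the statement is essentially a bookkeeping consequence of the two extremal characterizations \eqref{def_lambda^*} and \eqref{char_lambda_R} together with the monotone convergence $\lambda_R\searrow\bar\lambda$ already recorded in Theorem \ref{thm_bar_lambda}. The only points deserving a line of justification are that a subsolution on $\RN$ restricts to a subsolution on each $B_R$, and that \eqref{char_lambda_R} is indeed the exact $B_R$-analogue of \eqref{def_lambda^*}.
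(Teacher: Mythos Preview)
Your argument is correct but differs from the paper's for the inequality $\lambda^*\le\bar\lambda$. The paper proceeds by contradiction through the simplicity Lemma~\ref{lemma_simplicity}: assuming $\bar\lambda<\lambda^*$, one picks $\tilde\lambda\in(\bar\lambda,\lambda^*)$ and a $C^2$-subsolution $\tilde v$ of \eqref{erg_prob}$_{\tilde\lambda}$; then $\tilde v$ is a \emph{strict} subsolution of \eqref{erg_prob}$_{\bar\lambda}$, so Lemma~\ref{lemma_simplicity} forces $\tilde v=\bar\phi+c$, contradicting strictness. You instead feed any admissible global subsolution into the ball-characterization \eqref{char_lambda_R} and pass to the limit via $\lambda_R\searrow\bar\lambda$. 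Your route is more elementary---it needs only the monotonicity of $(\lambda_R)_R$ and the definition \eqref{char_lambda_R}, not the simplicity machinery---and it sidesteps a delicate point in the paper's version, namely that Lemma~\ref{lemma_simplicity} is stated for \emph{bounded-from-below} subsolutions while \eqref{def_lambda^*} imposes no such restriction. The paper's route, in turn, exhibits the corollary as an immediate dividend of the simplicity result, which is the conceptual thread of Section~\ref{sec_prelim}.
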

        
        \begin{proof}
            The fact that $(\bar\lambda, \bar\phi)$ is a solution pair of \eqref{erg_prob} and the definition of $\lambda^*$ given by \eqref{def_lambda^*} imply at once that $\bar\lambda \leq \lambda^*$. 
            
            Assume $\bar\lambda < \lambda^*$. Again by \eqref{def_lambda^*}, there exist $\tilde\lambda\in (\bar\lambda, \lambda^*)$  and $\tilde v\in C^2(\RN)$ such that 
                \begin{equation*}
                    \tilde\lambda - \Delta \tilde v + |D\tilde v|^m \leq f(x) \quad \text{in }\RN.
                \end{equation*}
            But $\bar\lambda <\tilde \lambda$ implies that
                \begin{equation*}
                    \bar\lambda - \Delta \tilde v + |D\tilde v|^m < f(x) \quad \text{in }\RN,
                \end{equation*}
            therefore, by Lemma \ref{lemma_simplicity}, $\tilde v \equiv \bar \phi + c$ for some $c\in \R$. We thus obtain
                \begin{equation*}
                    \bar\lambda - \Delta \tilde v + |D\tilde v|^m = f(x) \quad \text{in }\RN,
                \end{equation*}
            a contradiction.
        \end{proof}
        
        \begin{proof}[Proof of Proposition \ref{prop_simplicity}]
            The result is an immediate consequence of Lemma \ref{lemma_simplicity} and Corollary \ref{cor_lambdas}. It shows, however, the full reach of the previous arguments.
        \end{proof}
    
   \medskip 
    \begin{theorem}[Uniqueness of the ergodic constant]\label{thm_uniqueness}
        Assuming \ref{assmp:growth_f} and \ref{assmp:growth_Df}, the \textit{generalized ergodic constant} of \eqref{erg_prob} is unique in the following sense: if $(\lambda, \psi)$ is a solution of \eqref{erg_prob} and $\psi$ is bounded from below, then $\lambda = \lambda^*$.
    \end{theorem}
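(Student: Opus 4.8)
The plan is to establish the two inequalities $\lambda\le\lambda^*$ and $\lambda\ge\lambda^*$ separately; throughout I will use Corollary \ref{cor_lambdas} to identify $\lambda^*=\bar\lambda$, together with the distinguished solution $\bar\phi$ of \eqref{erg_prob}$_{\lambda^*}$ furnished by Theorem \ref{thm_bar_lambda}. For the first inequality, note that $\psi$ is in fact a classical solution: on each ball $\psi$ is bounded, so the interior gradient estimates (Theorem \ref{gradBound} and Corollary \ref{gradBoundCor}) give $\psi\in\locLip(\RN)$; then $-\Delta\psi=f-\lambda-|D\psi|^m\in L^\infty_{\mathrm{loc}}$ gives $\psi\in C^{1,\alpha}_{\mathrm{loc}}$, and since $f\in\locLip$ the right-hand side then belongs to $C^{0,\alpha}_{\mathrm{loc}}$, so Schauder estimates give $\psi\in C^2(\RN)$. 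Thus $\psi$ is a $C^2$-subsolution of \eqref{erg_prob}$_\lambda$, so $\lambda$ lies in the set defining $\lambda^*$ in \eqref{def_lambda^*} and $\lambda\le\lambda^*$.

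For the reverse inequality I argue by contradiction, assuming $\lambda<\lambda^*=\bar\lambda$. Raising the ergodic constant,
\[
\bar\lambda-\Delta\psi+|D\psi|^m=f+(\bar\lambda-\lambda)>f\qquad\text{in }\RN,
\]
so $\psi$ is a bounded-from-below supersolution of \eqref{erg_prob}$_{\bar\lambda}$, and a \emph{strict} one. The key step is then a simplicity statement for supersolutions, dual to Lemma \ref{lemma_simplicity} (presumably Proposition \ref{prop_simplicity_below} announced in the introduction): every bounded-from-below supersolution $w$ of \eqref{erg_prob}$_{\bar\lambda}$ satisfies $w=\bar\phi+c$ for some constant $c$. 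Granting this and applying it to $w=\psi$, we get $\bar\lambda-\Delta\psi+|D\psi|^m=\bar\lambda-\Delta\bar\phi+|D\bar\phi|^m=f$, contradicting the strict inequality above; hence $\lambda\ge\lambda^*$, and combined with the first part $\lambda=\lambda^*$.

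To prove the supersolution simplicity I would mirror the proof of Lemma \ref{lemma_simplicity} with the roles of sub- and supersolutions exchanged. With $(\lambda_R,\phi_R)$ the state-constraint solutions on $B_R$ from Theorem \ref{thm_bar_lambda} ($\lambda_R>\bar\lambda$, $\lambda_R\searrow\bar\lambda$, $\phi_R\to\bar\phi$ locally uniformly in $C^2$), set $\phi_R^\nu=\nu\phi_R$ with $\nu\in(0,1)$; since $\nu^m\le\nu$,
\[
-\Delta\phi_R^\nu+|D\phi_R^\nu|^m=\nu(f-\lambda_R)+(\nu^m-\nu)|D\phi_R|^m\le\nu(f-\lambda_R),
\]
and as $f\ge\min_{\RN}f$ and $\lambda_R>\bar\lambda$, for $\nu$ close enough to $1$ (depending on $R$) this makes $\phi_R^\nu$ a uniformly strict subsolution of \eqref{erg_prob}$_{\bar\lambda}$ on $B_R$. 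A maximum of $\phi_R^\nu-w$ over $\overline B_R$ cannot then be interior (testing the supersolution inequality for $w$ with $\phi_R^\nu$ there contradicts strictness), so it lies on $\partial B_R$; controlling $\phi_R^\nu-w$ there and letting $\nu\to1^-,\,R\to+\infty$ gives $\bar\phi\le w+c$. Then $\nu\bar\phi-w\le(\nu-1)\bar\phi+c\to-\infty$ at infinity (as $\bar\phi$ is coercive by Lemma \ref{lemma:supersol_growth}), so $\nu\bar\phi-w$ attains a global maximum at some $x^\nu$; combining, at $x^\nu$, the supersolution inequality for $w$ tested with $\nu\bar\phi$ and the pointwise bound $-\Delta(\nu\bar\phi)+|D(\nu\bar\phi)|^m\le\nu(f-\bar\lambda)$ yields $f(x^\nu)\le\bar\lambda$, so $x^\nu$ remains in a fixed ball by coercivity of $f$. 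Passing to the limit $\nu\to1^-$ (using lower semicontinuity of $w$) we get $w\ge\bar\phi+\tilde c$ in $\RN$ with equality at a point, and the strong comparison principle for \eqref{erg_prob}$_{\bar\lambda}$ (as at the end of Lemma \ref{lemma_simplicity}) gives $w=\bar\phi+\tilde c$.

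The hard part is the boundary quantity $\max_{\partial B_R}(\phi_R^\nu-w)$ in the step producing $\bar\phi\le w+c$: in Lemma \ref{lemma_simplicity} the coercivity of $f$ alone confined the relevant extremum to a fixed ball, whereas here one must show this quantity stays bounded above while $R\to+\infty$ and, simultaneously, $\nu\to1^-$ at the rate forced by $\lambda_R-\bar\lambda\to0$. This is precisely where assumption \ref{assmp:growth_Df} enters: it is expected to yield the gradient bound $|D\psi|\le C\bigl(1+f^{1/m}\bigr)$ for bounded-from-below super/solutions of \eqref{erg_prob} (a Bernstein-type estimate compatible with $|Df|\lesssim f^{(2m-1)/m}$), hence sharp two-sided control on the growth at infinity of $w$ and of the $\phi_R$, which is what makes the boundary term harmless. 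I expect this uniform growth estimate at infinity, rather than the local viscosity-solution bookkeeping, to be the technical heart of the argument.
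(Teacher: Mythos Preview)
The paper does not actually prove this theorem: its ``proof'' consists of citations to \cite{barles2016unbounded} (for $m=2$) and \cite{arapostathis2019uniqueness} (for $1<m<2$), the latter relying on the uniqueness of an invariant measure for the underlying diffusion rather than on PDE comparison arguments. Your proposal is therefore an attempt at something the paper deliberately outsources.

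Your first inequality $\lambda\le\lambda^*$ is fine: the bootstrap to $\psi\in C^2$ is correct and the conclusion follows directly from the definition \eqref{def_lambda^*}.

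The second half has two problems. First, invoking Proposition \ref{prop_simplicity_below} is circular in the paper's logical order: that proposition is stated \emph{after} Theorem \ref{thm_uniqueness} and its proof needs $\nu_R\to\lambda^*$ (equivalently $\gamma_R\to1$), which is obtained precisely by applying Theorem \ref{thm_uniqueness} to the limit pair $(\bar\nu,\bar\psi)$ of Proposition \ref{prop_low_conv_R}. Second, and more seriously, your own sketch of the supersolution simplicity uses the state-constraint family $(\lambda_R,\phi_R)$ with a factor $\nu<1$. But $\phi_R(x)\to+\infty$ as $x\to\partial B_R$, so $\phi_R^\nu-w\to+\infty$ there for any $\nu>0$; hence $\max_{\overline B_R}(\phi_R^\nu-w)=+\infty$ and the ``boundary quantity'' you hope to control via \ref{assmp:growth_Df} is infinite regardless of growth estimates on $w$. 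This is exactly the asymmetry discussed in Remark \ref{rmk_differences_of _approximations}: the paper's proof of Proposition \ref{prop_simplicity_below} uses the \emph{periodic} approximation $(\nu_R,\psi_R)$, which is bounded on $\RN$, so that $\psi_R^\gamma-w\to-\infty$ at infinity (by coercivity of $w$) and the maximum is attained at a finite point. The state-constraint approximation simply cannot play the role you assign it here.

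In short, your reduction of $\lambda\ge\lambda^*$ to a supersolution simplicity statement is the right idea, but the proof of that statement that you outline does not go through, and in the paper's architecture the correct version of it already presupposes the theorem you are trying to prove. The genuine input from \ref{assmp:growth_Df} is not a boundary-term estimate but the measure-theoretic uniqueness result of \cite{arapostathis2019uniqueness}, which the paper takes as a black box.
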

    
    \begin{proof}
        For $m=2$, this result is contained in \cite{barles2016unbounded}, Theorem 3.1; for $m\in (1,2)$,  it is covered in part (\textit{a}) of Theorem 2.1 in \cite{arapostathis2019uniqueness}.
    \end{proof}
    
    \begin{corollary}[Uniqueness of solutions for \eqref{erg_prob}]\label{cor_uniqueness}
        Assuming \ref{assmp:growth_f} and \ref{assmp:growth_Df}, solutions for \eqref{erg_prob} are unique up to an additive constant. More specifically, if $(\lambda, \phi)$ and $(\psi, \nu)$ are two bounded-from-below solutions of \eqref{erg_prob}, then $\lambda=\nu$ and $\psi(x) = \phi (x) + c$ for all $x\in \RN$ for some $c\in \R$.
    \end{corollary}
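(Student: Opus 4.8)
The plan is to obtain Corollary~\ref{cor_uniqueness} directly from the two results that precede it, namely Theorem~\ref{thm_uniqueness} (uniqueness of the ergodic constant) and Proposition~\ref{prop_simplicity} (generalized simplicity at level $\lambda^*$); essentially no new computation is needed, and the only thing to be careful about is bookkeeping of the unknown constant in \eqref{erg_prob}.

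First I would apply Theorem~\ref{thm_uniqueness} separately to each of the two solution pairs. Since $\phi$ and $\psi$ are bounded from below and $(\lambda,\phi)$, $(\nu,\psi)$ both solve \eqref{erg_prob}, the theorem forces $\lambda = \lambda^*$ and $\nu = \lambda^*$; in particular $\lambda = \nu$, which is the first assertion. This step also reduces the remaining task to comparing two bounded-from-below solutions of \emph{the same} equation \eqref{erg_prob}$_{\lambda^*}$, which is precisely the situation covered by Proposition~\ref{prop_simplicity}.

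Then I would invoke Proposition~\ref{prop_simplicity} with $\chi = \phi$ and $v = \psi$: here $\phi$ is a bounded-from-below solution of \eqref{erg_prob}$_{\lambda^*}$, while $\psi$, being a (continuous, hence $USC$) solution of \eqref{erg_prob}$_{\lambda^*}$ that is bounded from below, is in particular a bounded-from-below $USC$ subsolution of \eqref{erg_prob}$_{\lambda^*}$. The proposition then yields a constant $c\in\R$ with $\psi(x) = \phi(x) + c$ for all $x\in\RN$, which is the second assertion, and the corollary follows.

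There is no genuine obstacle in this argument; the substance of the corollary sits entirely inside Theorem~\ref{thm_uniqueness} (which packages the uniqueness-of-$\lambda$ results of \cite{barles2016unbounded, arapostathis2019uniqueness}) and inside the generalized simplicity of Proposition~\ref{prop_simplicity}. The only point worth stating explicitly is that "solution of \eqref{erg_prob}" is meant in the sense of a pair $(\lambda,\phi)$ with both entries unknown, so that after Theorem~\ref{thm_uniqueness} identifies both ergodic values with $\lambda^*$, Proposition~\ref{prop_simplicity} applies verbatim.
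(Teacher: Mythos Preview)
Your proposal is correct and matches the paper's own approach: the paper's proof simply states that the corollary is an easy consequence of Theorem~\ref{thm_uniqueness} and the arguments of Lemma~\ref{lemma_simplicity}, which is exactly what you do (your use of Proposition~\ref{prop_simplicity} in place of Lemma~\ref{lemma_simplicity} is immaterial, since the former is an immediate repackaging of the latter via Corollary~\ref{cor_lambdas}).
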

    
    \begin{proof}
        This an easy consequence of Theorem \ref{thm_uniqueness} and the arguments of Lemma \ref{lemma_simplicity}.
    \end{proof}
      
    \begin{proposition}\label{prop_simplicity_below}
            Assume \ref{assmp:growth_f} and \ref{assmp:growth_Df}. Let $(\lambda^*, \phi)$ denote the unique solution pair of \eqref{erg_prob} given by Theorem \ref{thm_uniqueness} and $w\in LSC(\RN)$ be a supersolution of \eqref{erg_prob}$_{\lambda^*}$. Then, there exists $c\in \R$ such that $w(x) = \phi(x) + c$ for all $x\in \RN$.
    \end{proposition}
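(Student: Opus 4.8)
The plan is to reduce the statement, exactly as in the last step of the proof of Lemma~\ref{lemma_simplicity}, to producing a constant $c\in\R$ and a point $\hat x\in\RN$ with $\phi+c\le w$ on $\RN$ and $(\phi+c)(\hat x)=w(\hat x)$. Granting this, $\phi+c$ is a (classical, hence viscosity) subsolution and $w$ a supersolution of \eqref{erg_prob}$_{\lambda^*}$ that coincide at $\hat x$, so the strong comparison principle (\cite{giga2005strong}, Theorem~3.1 and Remark~3.6) forces $w\equiv\phi+c$. Hence it suffices to prove that $w-\phi$ is bounded from below and attains its infimum on $\RN$. Throughout, $w$ is taken bounded from below (as in Proposition~\ref{prop_simplicity}); by Lemma~\ref{lemma:supersol_growth}\textit{(i)} this already gives $w(x)/|x|\to+\infty$, so $w$ is coercive.

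As barriers I would use the periodic approximations $(\psi_R,\nu_R)$ of Proposition~\ref{prop_low_conv_R}: each $\psi_R$ is a bounded $C^2$ solution of $\nu_R-\Delta\psi_R+|D\psi_R|^m=f_R$ on $\RN/2S_R\mathbb{Z}^N$, where $f_R\le f$ on $\RN$ (in a period cell $f_R=\min\{f,R\}\le f$, and outside it $f_R\le R\le f$ by coercivity), $\nu_R\le\lambda^*$ (since $\psi_R$ is a $C^2$-subsolution of \eqref{erg_prob}$_{\nu_R}$, by \eqref{def_lambda^*}), $\nu_R\to\lambda^*$, and $\psi_R-\psi_R(0)\to\phi+c_0$ locally uniformly for some $c_0\in\R$ (the limit being $\phi$ up to a constant by Corollary~\ref{cor_uniqueness}). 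The key computation is the dual of the one in Lemma~\ref{lemma_simplicity}, now taking $\mu\in(0,1)$, so that $\mu^m<\mu$:
\begin{equation*}
\lambda^*-\Delta(\mu\psi_R)+|D(\mu\psi_R)|^m=\mu f_R+\lambda^*-\mu\nu_R+(\mu^m-\mu)|D\psi_R|^m\le\mu f_R+\lambda^*-\mu\nu_R\le\mu f+\lambda^*-\mu\nu_R\quad\text{in }\RN,
\end{equation*}
so $\mu\psi_R$ is a subsolution of \eqref{erg_prob}$_{\lambda^*}$ with right-hand side $\mu f+\lambda^*-\mu\nu_R$.

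Now fix $\mu\in(0,1)$. Since $w$ is coercive and $\psi_R$ bounded, $w-\mu\psi_R$ is lower semicontinuous and coercive, hence attains its global minimum at some $z_R^\mu\in\RN$. As $\mu\psi_R$ (up to an additive constant) touches $w$ from below at $z_R^\mu$ and $w$ is a supersolution, $\lambda^*-\Delta(\mu\psi_R)(z_R^\mu)+|D(\mu\psi_R)(z_R^\mu)|^m\ge f(z_R^\mu)$; comparing with the displayed inequality gives $(1-\mu)f(z_R^\mu)\le(1-\mu)\lambda^*+\mu(\lambda^*-\nu_R)$. Choosing $R=R(\mu)$ large enough that $\mu(\lambda^*-\nu_{R(\mu)})\le1-\mu$, we obtain $f(z_{R(\mu)}^\mu)\le\lambda^*+1$, so by coercivity of $f$ (\ref{assmp:growth_f}) the minimizer lies in a fixed ball $\bar B_{R_1}$ with $R_1$ independent of $\mu$. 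Consequently $w(x)-\mu\psi_{R(\mu)}(x)\ge\min_{\bar B_{R_1}}(w-\mu\psi_{R(\mu)})$ for all $x\in\RN$, which, rearranged to involve only the convergent sequence $\psi_R-\psi_R(0)$, reads
\begin{equation*}
w(x)\ge\mu\big(\psi_{R(\mu)}(x)-\psi_{R(\mu)}(0)\big)+\min_{\bar B_{R_1}}\Big(w-\mu\big(\psi_{R(\mu)}-\psi_{R(\mu)}(0)\big)\Big).
\end{equation*}
Letting $\mu\to1^-$, so $R(\mu)\to+\infty$, and using the local uniform convergence $\psi_{R(\mu)}-\psi_{R(\mu)}(0)\to\phi+c_0$ together with the lower semicontinuity of $w$, the right-hand side tends to $\phi(x)+c_0+\min_{\bar B_{R_1}}(w-\phi-c_0)=\phi(x)+\min_{\bar B_{R_1}}(w-\phi)$. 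Hence $w\ge\phi+c$ on $\RN$ with $c:=\min_{\bar B_{R_1}}(w-\phi)$, attained at some $\hat x\in\bar B_{R_1}$ since $w-\phi$ is lower semicontinuous on the compact $\bar B_{R_1}$; by the reduction in the first paragraph, $w\equiv\phi+c$.

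The step I expect to be the main obstacle is the compactness one: guaranteeing that $w-\mu\psi_R$ attains a global minimum. This is exactly why the \emph{bounded} periodic barriers $\psi_R$ are used, rather than $\mu\phi$ directly (being coercive, $\mu\phi$ would not render $w-\mu\phi$ coercive), and it is what makes the a priori lower bound on $w$, via Lemma~\ref{lemma:supersol_growth}\textit{(i)}, essential. Once coercivity of $w$ is available, the rest is routine: the algebraic core --- taking $\mu<1$ to turn the barrier into a strict subsolution outside a fixed compact set, then using the coercivity of $f$ to trap the extremal point --- is precisely dual to the proof of Lemma~\ref{lemma_simplicity}.
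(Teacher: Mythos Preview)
Your proof is correct and follows essentially the same route as the paper: use the bounded periodic approximants $(\psi_R,\nu_R)$ from Proposition~\ref{prop_low_conv_R}, scale by a factor $<1$ to produce a strict subsolution, exploit coercivity of $w$ (via Lemma~\ref{lemma:supersol_growth}) to guarantee a global minimizer, and then use coercivity of $f$ to trap that minimizer in a fixed compact set. The only differences are cosmetic: the paper couples the scaling to $R$ via $\gamma_R=1+\nu_R-\lambda^*$ and then runs a separate second step with $w-\gamma\phi$ (mirroring the two-step structure of Lemma~\ref{lemma_simplicity}) to locate the touching point, whereas you fix $\mu$, choose $R(\mu)$, and extract both the lower bound $w\ge\phi+c$ and the touching point $\hat x\in\bar B_{R_1}$ in a single limit --- a slight streamlining.
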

    
    \begin{remark}
        The statement of Proposition \ref{prop_simplicity_below} is of course symmetrical to that of Lemma \ref{lemma_simplicity}. We have chosen to state them separately since we are not able to obtain the analogue of Corollary \ref{cor_lambdas} for supersolutions. See Remark \ref{rmk_differences_of _approximations} below.
    \end{remark}
    
        \begin{proof}
            The proof is similar to that of Lemma \ref{lemma_simplicity}, so we merely point out a couple of differences in the construction.
            
            For $\gamma<1$, let $\psi_R^\gamma:= \gamma \psi_R$, where $\psi_R$ is defined as in Proposition \ref{prop_low_conv_R}. Thus $\psi_R^\gamma$ satisfies
                \begin{equation*}
                    -\Delta \psi_R^\gamma + |D \psi_R^\gamma|^m \leq \gamma (f_R(x) - \nu_R) \leq \gamma(f(x) -  \nu_R),
                \end{equation*}
            where the second inequality follows from the definition of $f_R$ as an extension of $\min\{f,R\}$.
            
            Consider $\min_{\RN} (w - \psi_R^\gamma)$. This minimum is attained since $\psi_R$ is periodic and therefore bounded in $\RN$ (though the bound may depend on $R$), while $w$ is coercive by Lemma \ref{lemma:supersol_growth}. Thus we may use $ \psi_R^\gamma\in C^2$ as a test function for \eqref{erg_prob}. 
            
            Setting $\gamma_R = 1 + \nu_R - \lambda^*$, and noting the definition of $\lambda^*$ implies that $\gamma_R < 1$, we can argue as in Lemma \ref{lemma_simplicity} to show that $\min_{\RN} (w - \psi_R^{\gamma_R})$ is attained in a fixed compact set. Letting $R\to +\infty$ gives $w(x) \geq \phi(x) + c$ for some $c\in \R$. 
            
            To show equality, we argue exactly as in the second part of Lemma \ref{lemma_simplicity}, considering the difference $w - \phi^\gamma$ for $\gamma<1$, where $\phi$ is the unique solution of \eqref{erg_prob}.
        \end{proof}
        
       \begin{remark}\label{rmk_differences_of _approximations}
            Both $(\lambda_R, \phi_R)$ and $(\nu_R, \psi_R)$ are approximations of the unique bounded-from-below solution $(\lambda^*, \phi)$ given by Theorem \ref{thm_uniqueness} and Proposition \ref{prop_simplicity}. However, there are a couple of differences which allow us to obtain Lemma \ref{lemma_simplicity} and Corollary \ref{cor_lambdas} for the family $(\lambda_R, \phi_R)$ without appealing to the results of \cite{arapostathis2019uniqueness}, while the analogous results for $(\nu_R, \psi_R)$ cannot be obtained in this way, at least with similar techniques: first, since $\lambda_R$ is related to the ergodic problem with \textit{state-constraints} boundary condition, we have the extremal characterization \eqref{char_lambda_R}, which implies the monotonicity of $(\lambda_R)_R$, and in particular that $\lambda_R\geq \bar\lambda$; second, $\lambda^*$ is defined as a critical value for the existence of \textit{subsolutions} of \eqref{erg_prob}, and the argument of Corollary \ref{cor_lambdas} relies heavily on this fact.
        \end{remark}
        
    \section{Large-time behavior}\label{sec_large}
        
        \begin{proposition}\label{prop_unif_bound}
            For any compact $K\subset \RN$, $u(x,t) - \lambda^*t$ is bounded over $K\times(0,+\infty)$. Equivalently,
                \begin{equation*}
                    \frac{u(\cdot,t)}{t} \to \lambda^* \quad\text{locally uniformly in }\RN.
                \end{equation*}
        \end{proposition}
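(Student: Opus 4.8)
The plan is to prove the slightly stronger two‑sided estimate: for every compact $K$ there is $C_K\in\R$ with $|u(x,t)-\lambda^*t|\le C_K$ on $K\times(0,+\infty)$; dividing by $t$ and letting $t\to+\infty$ then yields $u(\cdot,t)/t\to\lambda^*$ locally uniformly. The essential point is that one should \emph{not} try to compare $u$ with an ergodic‑type barrier on all of $\RN$ — there the lack of a comparison principle together with the arbitrary growth of $u_0$ is a genuine obstruction — but rather work on a large fixed ball $B_R$, where $u_0$ is automatically bounded, and invoke the comparison principle underlying the well‑posedness of the state‑constraint problem \eqref{eqonball}–\eqref{initialonball} (see \cite{barles2004generalized}), which requires no information on $u$ along $\partial B_R\times(0,+\infty)$.

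So, let $(\lambda^*,\phi)$ be the $C^2$ solution pair of \eqref{erg_prob}$_{\lambda^*}$ provided by Theorem \ref{thm_bar_lambda} together with Corollary \ref{cor_lambdas} — it is the latter that guarantees the ergodic constant of this pair is exactly $\lambda^*$. For each $c\in\R$ the function $\Phi^c(x,t):=\phi(x)+\lambda^*t+c$ satisfies $\partial_t\Phi^c-\Delta\Phi^c+|D\Phi^c|^m=\lambda^*-\Delta\phi+|D\phi|^m=f(x)$ on $\RN\times(0,+\infty)$, so it is an exact classical solution of \eqref{vhj_whole} there, hence simultaneously a sub‑ and a supersolution. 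Fix $R>0$ with $K\subset\subset B_R$ and set
\[
  c_R^-:=\min_{\overline{B_R}}(u_0-\phi),\qquad c_R^+:=\max_{\overline{B_R}}(u_0-\phi),
\]
both finite because $u_0$ and $\phi$ are continuous on the compact set $\overline{B_R}$; by construction $\Phi^{c_R^-}(\cdot,0)\le u_0\le\Phi^{c_R^+}(\cdot,0)$ on $\overline{B_R}$.

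For the upper bound I would view $u$ as a subsolution of \eqref{vhj_whole} in the open set $B_R$ and $\Phi^{c_R^+}$ as a supersolution in $\overline{B_R}$; the state‑constraint comparison principle on $B_R\times[0,T]$, applied for every $T>0$, gives $u\le\Phi^{c_R^+}$ on $\overline{B_R}\times[0,+\infty)$. For the lower bound I would exchange the roles: $\Phi^{c_R^-}$ is a subsolution in $B_R$, while $u$, being a solution of \eqref{vhj_whole} on all of $\RN$, is in particular a supersolution on $\overline{B_R}$ (the supersolution inequality holds at every point, including those of $\partial B_R$), so the same principle yields $\Phi^{c_R^-}\le u$ on $\overline{B_R}\times[0,+\infty)$. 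Combining the two, for $(x,t)\in K\times[0,+\infty)$,
\[
  \min_K\phi+c_R^-\ \le\ u(x,t)-\lambda^*t\ \le\ \max_K\phi+c_R^+,
\]
which is the claimed bound, and the convergence statement follows as above.

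The step I expect to require the most care is the legitimacy of invoking the state‑constraint comparison in each of the two directions: one must check that $u$ restricted to $\overline{B_R}$ is genuinely a subsolution in $B_R$ and a supersolution \emph{up to the boundary} — both immediate from $u$ solving \eqref{vhj_whole} on $\RN$ — and that comparison holds for continuous (hence locally bounded) sub‑ and supersolutions of the operator $w\mapsto w_t-\Delta w+|Dw|^m-f$ on the smooth bounded cylinder $B_R\times(0,T)$, which is standard for $1<m\le2$ and is exactly the tool already used in the proof of Theorem \ref{thm_exist_parab}. Note that no hypothesis on $u_0$ or $u$ beyond continuity enters here; the arbitrary growth of the data is harmless precisely because the comparison is confined to $\overline{B_R}$, where $u_0-\phi$ is bounded by compactness.
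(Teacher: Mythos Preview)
Your core claim---that a solution of \eqref{vhj_whole} on all of $\RN$ automatically restricts to a state-constraint supersolution on $\overline{B_R}$---is false, and this breaks both halves of the argument. The state-constraint comparison of \cite{barles2004generalized} requires the supersolution inequality to hold for every smooth $\varphi$ such that $v-\varphi$ has a local minimum \emph{relative to $\overline{B_R}$} at a boundary point; such a $\varphi$ is unconstrained in the outward normal direction, so the inequality cannot be read off from the equation on $\RN$. Concretely, $w\equiv0$ is a classical solution of $w_t-\Delta w+|Dw|^m=0$ on $\RN$, yet testing with $\varphi(x)=A(|x|-R)$ at any $x_0\in\partial B_R$ (a global minimum of $w-\varphi$ on $\overline{B_R}$) gives $\varphi_t-\Delta\varphi+|D\varphi|^m=-A(N{-}1)/R+A^m<0$ for $N\ge2$ and $A>0$ small. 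Hence $w\equiv0$ is \emph{not} a state-constraint supersolution on $\overline{B_R}$, and the same mechanism obstructs ``$\Phi^{c_R^+}$ supersolution in $\overline{B_R}$'' in your upper bound and ``$u$ supersolution in $\overline{B_R}$'' in your lower bound; your parenthetical ``immediate from $u$ solving \eqref{vhj_whole} on $\RN$'' is precisely the step that fails.

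This is exactly why the paper does not use the whole-space pair $(\lambda^*,\phi)$ directly. For the upper bound it takes the state-constraint ergodic pair $(\lambda_R,\phi_R)$ on $B_R$: since $\phi_R(x)\to+\infty$ as $x\to\partial B_R$, the barrier $V_R=\phi_R+\lambda_R t+\sup_{B_R}(u_0-\phi_R)$ admits no boundary test functions and is a genuine state-constraint supersolution, whence $u\le V_R$ on $B_R$; the price is that one obtains only $\limsup u/t\le\lambda_R$ and must send $R\to\infty$ via Corollary~\ref{cor_lambdas}. For the lower bound no state-constraint device is available at all, so the paper switches to the bounded periodic approximants $(\nu_R,\psi_R)$ of Proposition~\ref{prop_low_conv_R} and compares on $\RN\times(t_0,+\infty)$, the comparison being justified because $u(\cdot,t_0)$ is coercive by Lemma~\ref{lemma:supersol_growth}\textit{(ii)} while $\psi_R$ is bounded. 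Your argument cannot be repaired without supplying barriers that actually satisfy the boundary requirement---which forces one back to approximants with $\lambda_R\neq\lambda^*$ and a subsequent limit in $R$.
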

        
        \begin{proof}
            Let $K\subset\RN$. For $R>0$, We define $V_R(x, t) = \phi_R(x) +\lambda_R t + \sup_{B_R} (u_0-\phi_R)$, where $(\phi_R, \lambda_R)$ are defined as in \ref{thm_bar_lambda}. We note that $V_R$ and $u$ are respectively a supersolution and a subsolution of the \emph{parabolic} state-constraint problem in $B_R$,
                \begin{equation*}
                    \left\{ 
                        \begin{array}{cl}
                            v_t -\Delta v+ |Dv|^m = f(x) &  \textrm{in }B_R\\
                            v_t -\Delta v+ |Dv|^m \geq f(x) &  \textrm{on }\partial B_R.\\
                            v(\cdot, 0) = u_0 &  \textrm{in }\overline{B}_R.\\
                        \end{array}
                    \right.
                \end{equation*}
            Hence, by the comparison principle over a bounded domain (see \cite{barles2004generalized}), $V_R(x,t) \geq u(x,t)$ in $B_R \times (0,+\infty)$. It follows that
                \begin{align*}
                    & \frac{u(x,t)}{t} \leq \lambda_R + \frac{\phi_R(x) + \sup_{B_R} (u_0-\phi_R)}{t},\\
                    & \limsup_{t\to+\infty} \frac{u(x,t)}{t} \leq \lambda_R.
                \end{align*}
            By Theorem \ref{thm_bar_lambda} and Corollary \ref{cor_lambdas}, we have $\lambda_R \to \lambda^*$ as $R\to+\infty$. Thus, taking $R\to +\infty$ in the last inequality we have
                \begin{equation*}
                    \limsup_{t\to+\infty} \frac{u(x,t)}{t} \leq \lambda^*.
                \end{equation*}
            The lower limit for $\frac{u(x,t)}{t}$ is obtained similarly, by considering the solution pair $(\psi^R, \nu_R)$ of Proposition \ref{prop_low_conv_R}. In this case we compare the solution $u(x,t)$ to
                \begin{equation*}
                    U_R(x,t) :=  \psi_R(x) +\nu_R t + \inf_{B_R} (u_0-\psi_R)
                \end{equation*}
            over $\RN\times (t_0, +\infty)$ for some $t_0>0$. Despite lacking a general comparison principle for equation \eqref{vhj_whole}, this comparison is possible because we know $\psi_R$ is bounded (since it is periodic) and $u(x,t_0)\to+\infty$ as $|x|\to+\infty$ for any fixed  $t_0>0$, by Lemma \ref{lemma:supersol_growth}. The limit on the right hand side gives the desired lower bound by Theorem \ref{thm_uniqueness}, which in particular implies that $\nu_R\to \lambda^*$.
        \end{proof}
        
    \subsection{Proof of Theorem \ref{thm_main}}
        
        \begin{proof}[Proof of Theorem \ref{thm_main}]
        
         \noindent \textit{Step 1:} Let $v(x,t) = u(x,t) - \lambda^*t$ and define $\bar{v}(x,t) = \limsup\limits_{t\to+\infty}\ \!\!^* v(x,t)$. By stability, $\bar{v}$ is a subsolution of \eqref{erg_prob}$_{\lambda^*}$, and is bounded from below by Proposition \ref{prop_unif_bound}. Hence, by Lemma \ref{prop_simplicity}, there exists a constant $\hat{c}$ such that $\bar{v}(x) = \phi(x) + \hat{c}$ for all $x\in \RN$.\\
         
         \noindent \textit{Step 2:} We claim that there exists a sequence $(t_k)_{k\in \mathbb{N}}$ such that
             \begin{equation}\label{seq_locu_conv}
                 v(\cdot, t_k) \to \phi + \hat{c}\quad \textrm{uniformly over } K \textrm{ as } t_k \to +\infty,
             \end{equation}
         for any compact $K\subset \RN$. This follows from the same compactness argument given in the proof of Theorem 2 in \cite{barles2020large} for the case $m>2$, which we include here for convenience. 
         
         Fix $\hat{x}\in\RN$. By the definition of half-relaxed limits, there exists a sequence $(x_n, t_n)\in\RN\times(0,+\infty)$ such that $x_n\to \hat{x}, \ t_n\to\infty$, and $v(x_n,t_n)\to\bar{v}(\hat{x})$. Consider $v_n(\cdot):=v(\cdot, t_{n} - 1)$. By Proposition \ref{prop_unif_bound}, the sequence $(v_n)$ is uniformly bounded over compact sets. Furthermore, by the local gradient bound of Theorem \ref{gradBound} (in the Appendix), it is also uniformly equicontinuous over compact sets. Thus, there exists $w_0\in C(\RN)$ such that, given a compact $K\subset \RN$, there exists a subsequence $(v_{n'})$ such that $v_{n'}\to w_0$ uniformly over $K$ as $n'\to\infty$.
         
         Consider now
             \begin{equation*}
             w_{n'}(x,t) = v(x, t + t_{n'} - 1)\quad\textrm{for }(x,t)\in\RN\times(0,+\infty).
             \end{equation*}
         The sequence $(w_{n'})$ is again uniformly equicontinuous (in both space and time variables) due to Corollary \ref{gradBoundCor}. Thus, given also $T>0$, we have that 
             \begin{equation}\label{defW}
             w_{n'}\to w \textrm{ uniformly over } K\times[0,T] \quad\textrm{ for some }w \in C(\RN\times (0,+\infty))
             \end{equation}
         (again passing to a subsequence if necessary---this is omitted for ease of notation). By the definition of the half-relaxed limit $\bar{v}$, $w(x,t)\leq\bar{v}(x)=\phi(x) + \hat{c}$ for all $(x,t)\in\RN\times(0,+\infty)$, and by construction,
             \begin{equation}
             w(\hat{x},1) = \lim_{n'} v(\hat{x},t_{n'}) = \phi(\hat{x}) + \hat{c}.
             \end{equation}
         Hence, by the Strong Maximum Principle (see Lemma \ref{strongMaxP} in the Appendix), $w - (\phi + \hat{c})$ is constant in $\RN\times[0,1]$. In particular, $w_0(x)=\phi(x) + \hat{c}$ for all $x\in\RN$. Thus, given a compact $K\subset \RN$, there exists a subsequence $(v_{n'})$ such that $v_{n'}\to \phi(x) + \hat{c}$ uniformly over $K$ as $n'\to\infty$, and the claim follows from a simple diagonal argument.
       
       \noindent\textit{Step 3:} 
       Let $(\lambda^*,\phi)$ denote the solution pair of \eqref{erg_prob} such that $\min_{\RN} \phi = 0$; and let $(\lambda_R, \phi_R)$ denote the solution pair of \eqref{ergodic_SC_BR}, normalized so that $\phi_R\to \phi$ locally uniformly in $\RN$. 
       
       As in Lemma \ref{lemma_simplicity}, set $\mu_R = 1 + \lambda^*-\lambda_R > 1$, define $\phi_R^{\mu_R} = \mu_R\phi_R$, and denote by $x_R$ any point where $m_R:= \min_{B_R} (\phi_R^{\mu_R} - \phi)$ is achieved. We will show that $m_R= o(1)$ as $R\to+\infty$.
       
       Indeed, arguing as in Lemma \ref{lemma_simplicity}, we have that $x_R$ remains in a fixed compact set as $R\to+\infty$, say $\bar{B}_{\bar{R}}$. Thus, using the uniform convergence of $\phi_R^{\mu_R}\to \phi$ over $\bar{B}_{\bar{R}}$, we have
               \begin{align*}
                   m_R= \min_{B_R} (\phi_R^{\mu_R} - \phi) = \min_{B_{\bar{R}}} (\phi_R^{\mu_R} - \phi) \to 0, \quad \text{as } R\to+\infty.
               \end{align*}
       Let $\hat{K}\subset\RN$, $\epsilon>0$, and $R>0$ such that $\hat{K}\subset B_R$. Using the conslusion of \textit{Step 2} above, let $n_0\in \mathbb{N}$ such that for all $n\geq n_0$,
               \begin{equation*}
                   v(x, t_n) \leq \phi(x) + \hat{c} + \epsilon, \quad \textrm{for all } x\in \bar{B}_R.
               \end{equation*}
           
           We will construct a supersolution of the \emph{parabolic} state-constraints problem in $B_R$ to bound $v(x, t + t_n)$ by above for $x\in \hat{K}$ and $t>0$. Write $M_R = (-m_R)^+$, and define
               \begin{equation*}
                   V_R (x,t) = \phi_R^{\mu_R} + \hat{c} + \epsilon + (\mu_R\lambda_R - \lambda^*)t + M_R, \quad x\in B_R, \ t>0.
               \end{equation*}
          
          Recalling $\phi_R^{\mu_R}$ satisfies
              \begin{equation}\label{eq_mu_R}
                  -\Delta \phi_R^{\mu_R} + {\mu_R}^{1-m}|D\phi_R^{\mu_R}|^m = {\mu_R}(f(x) - \lambda_R)\quad\textrm{in } B_R,
              \end{equation}      
          we have 
               \begin{align*}
                   \partial_t V_R - \Delta V_R + |DV_R|^m ={}& {\mu_R}(f(x) - \lambda_R) + {\mu_R}\lambda_R - \lambda^* \\
                   \geq{}& f(x) - \lambda^* \qquad \textrm{in }B_R\times (0,+\infty),
               \end{align*}
           where we have used that $\mu_R>1$ and the assumption that $f\geq 0$. Furthermore, for any $t\geq 0$, 
               \begin{equation*}
                   V_R(x,t)\to +\infty \quad\textrm{as }x\to \partial B_R,
               \end{equation*}
            since $V_R(x,t)\geq \phi_R^{\mu_R}(x)$ and $\phi_R^{\mu_R}$ has this property. Finally, 
               \begin{align*}
                   V_R(x,0) &{}= \phi_R^{\mu_R}(x) + M_R + \hat{c} + \epsilon\\
                   &{}\geq \phi_R^{\mu_R}(x) + \max_{B_R}(\phi - \phi_R^{\mu_R}) + \hat{c} + \epsilon\\
                   &{}\geq \phi(x) + \hat{c} + \epsilon, \quad\textrm{for all } x\in B_R.
               \end{align*}
           
           Thus, by comparison (over a bounded domain) we have
               \begin{equation*}
                   V_R (x,t) \geq v(x, t + t_n) \quad \text{in } B_R \times (0, +\infty).
               \end{equation*}
           In particular, by the initial choice of $R>0$, this holds in $\hat{K}\times(0,+\infty)$.
           
           Fix $\hat{t}>0$. The previous inequality establishes that
                   \begin{equation*}
                       v(x, \hat{t} + t_n) \leq \phi_R^{\mu_R}(x) + ({\mu_R}\lambda_R - \lambda^*)\hat{t} + M_R + \hat{c} + \epsilon, \quad \text{for all } x\in \hat{K}.
                   \end{equation*}
               Sending $R\to +\infty$, by the uniform convergence of $\phi_R^{\mu_R}\to \phi$ over $\hat{K}$, using also that $M_R\to 0$ and $\mu_R\to 1$, we have
                   \begin{equation}\label{final_up_bound}
                       v(x, \hat{t} + t_n) \leq \phi(x) + \hat{c} + \epsilon, \quad\text{for all } x\in \hat{K}.
                   \end{equation}
               Since $\hat{K}$ and the right-hand side of the previous inequality is independent of $\hat{t}$, this provides the desired upper bound.\\
               
           \noindent\textit{Step 4.} To obtain the lower bound corresponding to \eqref{final_up_bound}, we define
                \begin{equation*}
                    \tilde{U}_R(x,t) = \psi_R^{\gamma_R}(x) + \hat{c} - \epsilon + (\gamma_R\nu_R - \lambda^*)t + \tilde{m}_R,
                \end{equation*}
           where $\tilde{m}_R = \min_{\RN} (\phi - \psi_R^{\gamma_R})$. Arguing as before, we can show that this minimum is achieved in a fixed compact set and therefore $\tilde{m}_R\to 0$ as $R\to+\infty$.
           
           Furthermore,
                \begin{equation*}
                    \tilde{U}_R(x,0) = \psi_R^{\gamma_R}(x) + \hat{c} - \epsilon + \tilde{m}_R \leq \phi(x) + \hat{c} - \epsilon,
                \end{equation*}
          and
            \begin{align*}
                &\partial_t\tilde{U}_R - \Delta \tilde{U}_R + |D\tilde{U}_R|^m \leq \gamma_R(f_R(x) - \nu_R) + \gamma_R\nu_R - \lambda^*\\
                &\qquad = f_R(x) - \lambda^* \leq f(x) - \lambda^*.
            \end{align*}
           Therefore, $\tilde{U}_R(x,t) \leq v(x, t + t_n)$ for all $x\in \RN, \ t>0$. In particular, this holds over $\hat{K}\times (0,+\infty)$. Hence, for fixed $\hat{t}>0$, taking $R\to+\infty$ in $\tilde{U}_R(x,t) \leq v(x, \hat{t} + t_n)$ we arrive at
                \begin{equation*}
                    \phi(x) + \hat{c} - \epsilon \leq v(x,\hat{t} + t_n)\quad \text{for all }x\in \hat{K},
                \end{equation*}
           and with this we conclude.
        \end{proof}
        
      \appendix
      \section{} 
        
        \begin{theorem}[Local Gradient Bounds]\label{gradBound}
        Let $R, \tau>0$.
        	\begin{enumerate}[label=(\textit{\alph*})]
        	\item\label{gradBerg} There exists $K_2>0$ depending only on $m$ and $N$, such that for any $R \geq R' + 1 > 0$ the solution of \eqref{erg_prob} satisfies
        		\begin{equation*}
        			\sup_{B_{R'}} |D\phi| \leq K_2(1 + \sup_{B_R} |f|^{\frac{1}{m}} + \sup_{B_R} |Df|^{\frac{1}{2m-1}}).
        		\end{equation*}
        	
        	\item\label{gradBparab} If $u$ is a solution of
        		\begin{align}
        			u_t - \Delta u + |Du|^m = f(x) &{}\quad\textrm{in } \Omega \times (0,T], \label{HJO}\\
        			u(x, 0) = u_0(x) &{}\quad\textrm{in } \overline{\Omega},\label{HJO-i}
        		\end{align}
        	where $\Omega$ is a domain of $\R^N$ such that $B_{R+1}\subset\Omega$ and $f\in \locLip(\RN)$, then $u$ is Lipschitz continuous in $x$ in $B_R\times[\tau, +\infty)$ and $|Du(x,t)|\leq L$ for a.e. $x\in B_R$, for all $t\geq\tau$, where $L$ depends on $R$ and $\tau$. Moreover this result holds with $\tau=0$ if $u_0$ is locally Lipschitz continuous in $\Omega$.
        	\end{enumerate}
        \end{theorem}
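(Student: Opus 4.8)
The plan is to prove both estimates by the \emph{Bernstein method}, localized by a cut-off and examined at an interior maximum of a suitably weighted version of $|D\phi|^2$ (respectively $|Du|^2$). I first treat part \textit{(a)}. Since $f\in\locLip(\RN)$, elliptic regularity applied to $-\Delta\phi=(f-\lambda)-|D\phi|^m$ shows that the $C^2$ solution $\phi$ of \eqref{erg_prob} is regular enough to differentiate the equation (or one mollifies $f$ and passes to the limit at the end). Writing $v:=|D\phi|^2$ and $g:=f-\lambda$, differentiating the equation in $x_k$, multiplying by $2\phi_k$ and summing gives the pointwise identity
\begin{equation*}
-\Delta v+2|D^2\phi|^2+m\,|D\phi|^{m-2}\,D\phi\cdot Dv=2\,D\phi\cdot Dg .
\end{equation*}
Since $\Delta\phi=|D\phi|^m-g$ and $|D^2\phi|^2\ge\frac1N(\Delta\phi)^2$, one obtains $2|D^2\phi|^2\ge\frac1N v^{m}-\frac2N g^2$. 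Fix $R\ge R'+1$ and a cut-off $\zeta\in C_c^\infty(B_{R'+1})$ with $\zeta\equiv1$ on $B_{R'}$, $0\le\zeta\le1$ and $|D\zeta|,|\Delta\zeta|\le C(N)$ (possible precisely because $R'+1\le R$, so the cut-off is independent of $R,R'$). Let $x_0$ maximize $w:=\zeta^{a}v$ on $\overline{B}_{R'+1}$, with $a=a(m)$ large to be fixed; we may assume $w(x_0)>0$, otherwise $v\equiv0$ on $B_{R'}$.

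At $x_0$ one has $Dw=0$ and $\Delta w\le0$, which yield $Dv=-\tfrac a\zeta(D\zeta)v$ and, inserting this into $\Delta(\zeta^a v)\le0$, the bound $\zeta^a\Delta v\le C(a,N)\,\zeta^{a-2}v$. Multiplying the Bernstein identity by $\zeta^a$, using these relations together with $2|D^2\phi|^2\ge\frac1N v^m-\frac2N g^2$, the drift bound $m|D\phi|^{m-2}|D\phi\cdot Dv|\le \frac{ma}\zeta|D\zeta|\,v^{(m+1)/2}$ and $|D\phi\cdot Dg|\le v^{1/2}|Df|$, one arrives at an inequality of the form
\begin{equation*}
\tfrac1N\,\zeta^{a}v^m\ \le\ C(a,N)\Big(\zeta^{a-2}v+\zeta^{a-1}v^{(m+1)/2}+\zeta^{a}v^{1/2}|Df|+\zeta^{a}g^2\Big)
\end{equation*}
at $x_0$. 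Writing $W=\zeta^a v$ and using $\zeta\le1$, each $v$-dependent term on the right is absorbed into $\tfrac1{2N}\zeta^{a}v^m=\tfrac1{2N}\zeta^{-a(m-1)}W^m\ge\tfrac1{2N}W^m$ by Young's inequality, provided $a$ is large enough; the constraint, coming from the negative powers of $\zeta$, is $a\ge\tfrac{2m}{m-1}$, and here one uses $\tfrac{m+1}2<m$ (valid since $m>1$) and Young with conjugate exponents $2m,\tfrac{2m}{2m-1}$ for the $|Df|$-term. This leaves $W^m\le C(m,N)\big(1+g(x_0)^2+|Df(x_0)|^{2m/(2m-1)}\big)$, and since $\zeta\equiv1$ on $B_{R'}$ forces $\sup_{B_{R'}}|D\phi|^2=\sup_{B_{R'}}W\le W(x_0)$ while $x_0\in B_{R'+1}\subset B_R$, taking $m$-th and square roots gives the bound with $\sup_{B_R}|f-\lambda|^{1/m}$ in place of $\sup_{B_R}|f|^{1/m}$; as $|\lambda|$ is controlled by the data (e.g.\ $\lambda\ge\min_{\RN}f$, and evaluating the equation at a minimum point of $\phi$ bounds $\lambda$ above), this reduces to the stated estimate.

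For part \textit{(b)} one runs the same scheme in the parabolic setting. With $v=|Du|^2$, differentiating \eqref{HJO} in $x_k$ gives
\begin{equation*}
\partial_t v-\Delta v+2|D^2u|^2+m\,|Du|^{m-2}\,Du\cdot Dv=2\,Du\cdot Df ,
\end{equation*}
and one localizes by $\zeta(x)$ as above together with a time cut-off $\theta\in C^\infty([0,\infty))$ with $\theta(0)=0$, $\theta\equiv1$ on $[\tau,\infty)$, $\theta'\ge0$ and $(\theta')^2/\theta\le C/\tau^2$. A maximum of $\theta(t)\zeta(x)^a v(x,t)$ over $\overline{B}_{R+1}\times[0,T]$ is attained either at an interior point $(x_0,t_0)$ with $t_0>0$ (where $\partial_t w\ge0$, $Dw=0$, $\Delta w\le0$) or else the maximal value is $\le0$; in the former case the analysis reduces to part \textit{(a)} once one has a \emph{local bound on $\partial_t u$}. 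To obtain it, differentiate \eqref{HJO} in $t$: $p:=\partial_t u$ solves the linear parabolic equation $p_t-\Delta p+m|Du|^{m-2}Du\cdot Dp=0$, and a local maximum-principle argument—using interior heat-regularity to bound $p$ on a slightly larger time-slice, so the bound is uniform in $t$ and $T$—yields $|\partial_t u|\le M$ on $B_{R+1}\times[\tau/2,\infty)$. Then $\Delta u=\partial_t u+|Du|^m-f$ gives $2|D^2u|^2\ge\frac1N v^m-C$ just as in the elliptic case, and the rest is identical, producing a bound on $|Du|$ uniform for $t\ge\tau$. The case $\tau=0$ with $u_0\in\locLip(\Omega)$ follows by taking $\theta\equiv1$ and using $\sup_{B_{R+1}}|Du_0|$ to control $w$ on $\{t=0\}$, after approximating $u_0$ by smooth functions with controlled Lipschitz constant.

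I expect the main obstacle to lie entirely in part \textit{(b)}. First, the solutions furnished by Theorem \ref{thm_exist_parab} are a priori only continuous viscosity solutions, so the Bernstein computation must be justified either by the \emph{weak Bernstein method} (doubling of variables, which yields the inequality for $v$ directly in the viscosity sense) or by regularizing the Hamiltonian $|\cdot|^m$ to a smooth, suitably controlled $H_\varepsilon$, performing the classical computation on the smooth solutions of the regularized problem, and passing to the limit by stability—as in the treatment of the case $m>2$ in \cite{barles2020large}. Second, there is an apparent circularity between the bound on $\partial_t u$ and the bound on $Du$, since each step involves the drift $m|Du|^{m-2}Du$ whose boundedness is not yet known; this must be broken, e.g.\ by carrying both estimates through the regularized problem simultaneously or by a short bootstrap. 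Finally, one must keep all constants independent of $T$—this is why the time cut-off is concentrated near $t=\tau$ and the $\partial_t u$ bound is derived on an unbounded time interval—since it is precisely the $T$-independence of $L$ for $t\ge\tau$ that makes Theorem \ref{gradBound} usable in the large-time analysis.
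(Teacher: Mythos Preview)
The paper does not give a proof of this theorem: it states that both parts are classical and refers to \cite{ichihara2012large}, \cite{lasry1989nonlinear}, \cite{lions1980resolution}, \cite{lions1985quelques} for part \textit{(a)} and to \cite{lions1982generalized} and the weak-Bernstein argument of \cite{barles2021local} for part \textit{(b)}. Your proposal therefore goes well beyond what the paper does.

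For part \textit{(a)} your localized Bernstein computation is the standard route behind the cited references, and the bookkeeping (the identity for $v=|D\phi|^2$, the lower bound $2|D^2\phi|^2\ge\tfrac1N v^m-\tfrac2N g^2$, the choice $a\ge 2m/(m-1)$, and Young with exponents $2m,\tfrac{2m}{2m-1}$ for the $|Df|$-term) is correct. The residual $|f-\lambda|^{1/m}$ versus $|f|^{1/m}$ is harmless for how the estimate is used in the paper.

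For part \textit{(b)} there is a real gap, which you yourself flag. Your reduction to the elliptic case hinges on a local $L^\infty$ bound for $p=u_t$, and the proposed ``local maximum-principle argument'' for $p_t-\Delta p+m|Du|^{m-2}Du\cdot Dp=0$ does not deliver this: the maximum principle only propagates a bound on $p$ from the parabolic boundary of a cylinder, and neither on $\{t=0\}$ (where $u_0\in\locLip$ gives no control of $u_t(\cdot,0)$) nor on the lateral boundary do you have one. The ``short bootstrap'' you allude to is not specified and is not standard. The clean resolution---and the one the paper actually points to---is the weak Bernstein method of \cite{barles2021local}: by doubling the spatial variable and working with $u(x,t)-u(y,t)$, the coercive contribution is extracted directly from the difference $|p|^m-|q|^m$ in the Hamiltonian, so one never invokes $|D^2u|^2\ge\tfrac1N(\Delta u)^2$ and the $u_t$-circularity does not arise. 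If you want a self-contained argument for \textit{(b)}, replace the classical Bernstein step by this doubling; as written, your sketch for \textit{(b)} does not close.
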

        
        Both results in Theorem \ref{gradBound} are classical. The estimate in \ref{gradBerg} appears as stated in \cite{ichihara2012large}, but can also be inferred from the results of \cite{lasry1989nonlinear} (see also \cite{lions1980resolution}, \cite{lions1985quelques}).
        
        The conclusion of \ref{gradBparab} can also be adapted from the results of \cite{lions1982generalized} (that recovers some of the results from \cite{lions1980resolution}). For a proof closer to our setting---namely, within the context of viscosity solutions, via the \emph{weak 
        Bernstein method}---we refer the reader to Theorem 4.1 in \cite{barles2021local}. The viscous Hamilton-Jacobi Equation (\ref{HJO}) is easily shown to satisfy the structure conditions required therein. 
        
        	\begin{corollary}\label{gradBoundCor}
        		Let $R, \tau>0$. The solution $u$ of \eqref{HJO}-\eqref{HJO-i} is H\"older-continuous of order $1/2$ in $B_R\times[\tau, +\infty)$ and $\|u\|_{C^{0,1/2}(B_R\times[\tau, +\infty))}\leq M$ for some $M>0$ depending on $R, \tau$, the data $u_0, f$ and universal constants. Moreover, this result holds with $\tau=0$ if $u_0$ is locally Lipschitz continuous in $\Omega$.
        	\end{corollary}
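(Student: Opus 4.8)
The plan is to establish the spatial and temporal moduli of continuity separately: Lipschitz continuity in $x$ is immediate from the gradient bound, while $\tfrac12$-H\"older continuity in $t$ requires a short barrier argument; the two then combine via the triangle inequality, $|u(x,t)-u(y,s)|\le L|x-y|+M_0|t-s|^{1/2}$.

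\emph{Spatial regularity.} By Theorem~\ref{gradBound}\ref{gradBparab}, $u(\cdot,t)$ is Lipschitz on $B_R$ with a constant $L=L(R,\tau)$ that is uniform in $t\ge\tau$ (and $\tau=0$ is admissible when $u_0$ is locally Lipschitz). Hence $|u(x,t)-u(y,t)|\le L|x-y|\le L(2R)^{1/2}|x-y|^{1/2}$ for $x,y\in B_R$, so it only remains to prove $|u(x,t)-u(x,s)|\le M_0|t-s|^{1/2}$ for $x\in B_R$ and $t,s\ge\tau$.

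\emph{Time regularity.} Fix $x_0\in B_R$, $\tau\le t_0<t_1$, set $h=t_1-t_0$, and choose $\rho=h^{(m-1)/(2m)}$; for $h$ small $B_\rho(x_0)$ lies in any prescribed neighborhood of $x_0$, so the gradient bound of Theorem~\ref{gradBound}\ref{gradBparab} applies there with some constant $L'$, and we write $C_f=\sup_{B_\rho(x_0)}|f|$ and let $\omega$ be the (merely qualitative) modulus of continuity in time of $u$ on a fixed compact neighborhood. On $Q=B_\rho(x_0)\times(t_0,t_1)$ I would compare $u$ with the smooth barriers
\begin{equation*}
\Psi^{\pm}(x,t)=u(x_0,t_0)\pm\bigl(A|x-x_0|^2+B(t-t_0)+C\bigr),\qquad A=h^{-1/2},\quad C=\frac{(L')^2}{4A},\quad B=C_f+2NA+(2A\rho)^m .
\end{equation*}
Young's inequality together with $|u(x,t_0)-u(x_0,t_0)|\le L'|x-x_0|$ gives $\Psi^{+}(\cdot,t_0)\ge u(\cdot,t_0)$ and $\Psi^{-}(\cdot,t_0)\le u(\cdot,t_0)$ on $B_\rho(x_0)$; the choice of $B$ makes $\Psi^{+}$ a supersolution and $\Psi^{-}$ a subsolution of \eqref{HJO} on $Q$ (for $\Psi^{+}$ one simply discards the favorable term $|D\Psi^{+}|^m\ge0$, while for $\Psi^{-}$ one must absorb $|D\Psi^{-}|^m\le(2A\rho)^m$); and on the lateral boundary $|x-x_0|=\rho$ the spatial Lipschitz bound reduces the required inequalities to $A\rho^2\ge\omega(h)+L'\rho$, which holds for $h$ below a data-dependent threshold because $A\rho^2=h^{(m-2)/(2m)}\ge1$ (for $h\le1$) while $\omega(h)+L'\rho\to0$. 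The comparison principle on bounded domains (\cite{barles2004generalized}) then yields $\Psi^{-}\le u\le\Psi^{+}$ on $\overline Q$, hence
\begin{equation*}
|u(x_0,t_1)-u(x_0,t_0)|\le Bh+C=C_f\,h+\bigl(2N+2^m+\tfrac14(L')^2\bigr)h^{1/2}=O(h^{1/2}).
\end{equation*}
For $h$ above the threshold the estimate is trivial from the qualitative continuity of $u$ on compacts, so $|u(x_0,t_1)-u(x_0,t_0)|\le M_0\,h^{1/2}$ in all cases.

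The step I expect to be the crux is the lower barrier $\Psi^{-}$: because $|Du|^m$ now sits on the unfavorable side, $B$ picks up the term $(2A\rho)^m$, and keeping $Bh=O(h^{1/2})$ while still taking $A$ large enough to make the bottom-boundary correction $C=(L')^2/(4A)$ of size $O(h^{1/2})$ forces the balance $A\sim h^{-1/2}$, $\rho\sim h^{(m-1)/(2m)}$ --- the exponent $(m-1)/(2m)$ being exactly the one for which $(2A\rho)^m\sim h^{-1/2}$. This is feasible for every $1<m\le2$, the quadratic case $m=2$ being borderline (there $A\rho^2$ no longer blows up but merely stays $\ge1$, which still suffices to beat $\omega(h)+L'\rho\to0$); alternatively, for $m=2$ one may linearize the equation via the Hopf--Cole change of unknown $v=e^{-u}$ and invoke classical parabolic estimates. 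Combining the spatial and temporal bounds gives the asserted $C^{0,1/2}$ estimate, with $M$ depending on $R$, $\tau$, the data $f$ and $u_0$ (through $L$, $C_f$ and the a priori time-modulus of $u$) and on universal constants.
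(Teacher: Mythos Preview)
Your proof is correct and follows the paper's second route: use the spatial Lipschitz bound from Theorem~\ref{gradBound}\ref{gradBparab} (so that $u_t-\Delta u=f-|Du|^m$ is locally bounded) and then deduce $\tfrac12$-H\"older continuity in $t$; the paper simply cites \cite{barles2003uniqueness} for this last implication, whereas you carry out the underlying barrier argument by hand with the scaled parabolas $\Psi^\pm$. The only loose end is that your threshold---and hence $M_0$---inherits a dependence on the qualitative time-modulus $\omega$ of $u$ rather than purely on the data, but this is harmless for the paper's applications and, for $1<m<2$, can be removed by replacing $\omega(h)+L'\rho$ on the lateral boundary with the local $L^\infty$ oscillation of $u$ (since $A\rho^2=h^{(m-2)/(2m)}\to\infty$ there).
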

        	
        	\begin{proof} The proof of Corollary~\ref{gradBoundCor} can be done in two ways: either by using classical interior parabolic estimates (see \cite{wang1992regularity}, Theorem 4.19, and also Theorem 4.36 in \cite{imbert2013introduction}), in which case the H\"older-regularity of the solution $u$ is of some order $\alpha\in (0,1)$ depending on universal constants, or by the argument of \cite{barles2003uniqueness}, which implies that a solution which is Lipschitz in $x$ is $1/2$-H\"older-continuous in $t$.
        	
        	In both proofs, the result relies on part \ref{gradBparab} of Theorem \ref{gradBound}, which implies that the solution $u$ of \eqref{vhj_whole} satisfies $|Du|\leq L$ in $B_{R}\times (\tau, +\infty)$, and therefore that $u_t - \Delta u $ is bounded in $B_{R}\times (\tau, +\infty)$. This allows the use of either of the two arguments just mentioned.
        	\end{proof} 
            
            \begin{lemma}[Strong Maximum Principle]\label{strongMaxP}
            	Let $R,C>0$. Any upper semicontinuous subsolution of
            		\begin{equation}\label{strongMaxP_eq}
            			u_t - \Delta u - C|Du| = 0 \quad\text{in } B_R \times (0, +\infty)
            		\end{equation}
            	that attains its maximum at some $(x_0, t_0)\in B_R \times (0, +\infty)$ is constant in $B_R \times [0, t_0]$.
            \end{lemma}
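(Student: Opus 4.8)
The plan is to read the statement as a \emph{strong minimum principle} for a non-negative supersolution and to deduce it from the weak parabolic Harnack inequality. Put $v:=M-u$, where $M=u(x_0,t_0)=\max_{B_R\times(0,+\infty)}u$. Reversing the sign in the definition of viscosity subsolution, $v$ is non-negative, lower semicontinuous, vanishes at $(x_0,t_0)$, and is a viscosity supersolution of
\[
    v_t-\Delta v+C|Dv|=0\qquad\text{in }B_R\times(0,+\infty).
\]
It suffices to prove $v\equiv0$ on $B_R\times(0,t_0]$; the values at $t=0$ and at $t=t_0$ then follow from lower semicontinuity, since for $x\in B_R$ one has $0\le v(x,0)\le\liminf_{(y,s)\to(x,0)}v(y,s)\le\lim_{s\to0^+}v(x,s)=0$, and likewise at $t_0$ by letting $s\to t_0^-$.

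The only analytic input is the weak Harnack inequality for non-negative viscosity supersolutions. The second-order part of the operator is the uniformly elliptic Laplacian, the first-order term $C|Dv|$ has bounded structure, and there is neither a zeroth-order term nor a right-hand side; hence $v$ belongs to the Krylov--Safonov class for which the parabolic weak Harnack inequality holds, with constants depending only on $N$ and $C$ (see, e.g., \cite{wang1992regularity}, \cite{imbert2013introduction}): on a parabolic cylinder $Q=B_{R_0}(\bar x)\times(\bar t-R_0^2,\bar t]$ there are subcylinders $Q^-$ (at earlier times) and $Q^+$ (at later times, with the vertex $z=(\bar x,\bar t)\in Q^+$) and an exponent $p_0>0$ such that $\big(|Q^-|^{-1}\!\int_{Q^-}v^{p_0}\big)^{1/p_0}\le C_0\inf_{Q^+}v$. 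It is essential that the right-hand side carries no additive $\|f\|_\infty$ term: this is exactly what would be lost by absorbing the gradient term into the source, and it is why a non-negative supersolution vanishing at one point must vanish on a whole space-time set (whereas, e.g., $\tfrac{1}{2}|x|^2$ is a non-negative solution of $v_t-\Delta v=-N$ vanishing only at the origin). Applying the inequality with $v(z)=0$ gives $\inf_{Q^+}v=0$, so the $L^{p_0}$-average over $Q^-$ vanishes, hence $v=0$ a.e.\ on $Q^-$ and, by lower semicontinuity, $v\equiv0$ on $Q^-$. In words: one zero of $v$ propagates to vanishing on a full space-time cylinder of comparable size, situated at earlier times.

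It remains to chain this to all of $B_R\times(0,t_0)$. Given any $(\hat x,\hat t)\in B_R\times(0,t_0)$, connect it to $(x_0,t_0)$ inside $B_R\times(0,t_0]$ by a continuous path along which the time-coordinate is non-increasing — for instance the polygonal path $(x_0,t_0)\to(x_0,\hat t)\to(\hat x,\hat t)$, which stays in $B_R\times(0,t_0]$ because $B_R$ is convex — and cover this path by finitely many cylinders as above, arranged so that the earlier-time subcylinder of each (where $v$ has just been shown to vanish) contains the vertex of the next. Iterating the propagation step along the chain yields $v(\hat x,\hat t)=0$. Hence $v\equiv0$ on $B_R\times(0,t_0)$, and the endpoint argument of the first paragraph gives $v\equiv0$, i.e.\ $u\equiv M$, on $B_R\times[0,t_0]$.

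The main obstacle is not any single estimate but the bookkeeping of this last step: organizing the finitely many Harnack cylinders so that they stay strictly inside $B_R\times(0,+\infty)$ — controlling the distances to $\partial B_R$, keeping the time positive, and, along the constant-time segment of the path, exploiting that $v$ is a supersolution also for $t>t_0$ — and so that the earlier-time subcylinders overlap the successive vertices in the correct time-order, which is the point where the geometry of the chain must be chosen carefully (in particular one typically descends in many small time-steps, so that the spatial spread accumulated by the chain outweighs the descent). Alternatively one can avoid the Harnack inequality and run the classical parabolic Hopf-lemma barrier argument — comparison over bounded domains, which we use throughout, is available — at the cost of an equally delicate choice of a touching ball; or one may simply invoke a general strong maximum principle for viscosity solutions of uniformly parabolic equations (cf.~\cite{da2004remarks}), the operator $(\tau,p,X)\mapsto\tau-\operatorname{tr}(X)-C|p|$ being continuous, degenerate elliptic, independent of $u$, vanishing at the origin, and uniformly elliptic in $X$.
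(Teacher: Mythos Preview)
Your argument is correct, and the Harnack-chain strategy you outline is the standard route to a strong maximum principle for viscosity solutions of uniformly parabolic equations with bounded first-order coefficients. The reduction to a non-negative lower semicontinuous supersolution $v=M-u$, the use of the weak Harnack inequality with no additive source term, and the propagation of the zero set backward in time via overlapping cylinders are all sound; your remark that the horizontal displacement is achieved by many small time-steps (so that the accumulated spatial spread $\sim nr$ dominates the time descent $\sim nr^{2}$) is precisely the geometric point that makes the chain work.

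The paper, by contrast, gives no argument at all: it simply cites Corollary~2.4 of \cite{da2004remarks} and Corollary~1 of \cite{bardi1999strong}, noting that the elliptic proof in the latter adapts to the parabolic setting. You actually mention this alternative yourself in your closing sentence. So your write-up is strictly more informative than the paper's, at the cost of having to manage the cylinder bookkeeping you flag; the paper's approach buys brevity by deferring entirely to the literature. Either is acceptable here, since the lemma is a standard tool rather than a contribution of the paper.
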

            
            \begin{proof}
                See, e.g., \cite{da2004remarks}, Corollary 2.4 and \cite{bardi1999strong}, Corollary 1. (The latter result concerns time-independent equations, but the method of proof equally applies in this context.)
            \end{proof}
            
    \noindent\textbf{Acknowledgments.} A.Q.~ was partially supported by FONDECYT Grant \# 1190282 and Programa Basal, CMM. U. de Chile. A.R.~was partially supported by Fondecyt, Postdoctorado Nacional 2019, Proyecto Nº 3190858.

\vspace{5mm}

\noindent \textsc{Alexander Quaas}\\
\noindent \textit{Email:} alexander.quaas@usm.cl\\
\noindent \textsc{Departamento de Matemática, Universidad Técnica Federico Santa María, Casilla V-110, Avda.~España 1680, Valparaíso, Chile.}\\[4pt]

\noindent \textsc{Andrei Rodríguez-Paredes}\\
\noindent \textit{Email:} andrei.rodriguez@usach.cl\\
\noindent \textsc{Departamento de Matemática y Ciencia de la Computación, Universidad de Santiago de Chile, Avda.~Libertador General Bernardo O'Higgins 3383, Santiago, Chile.}
        
 \end{document}